\newtheorem{theorem}{Theorem}[section]
\newtheorem{remark}{Remark}[section]
\newtheorem{proposition}{Proposition}[section]
\newtheorem{corollary}{Corollary}[section]
\newcommand{\eqnsection}{
\renewcommand{\theequation}{\thesection.\arabic{equation}}
    \makeatletter
    \csname  @addtoreset\endcsname{equation}{section}
    \makeatother}
\def\bint_#1^#2{\raise.5pt\hbox{\footnotesize${\displaystyle\int_{#1}^{#2}}$}}
\def\bsum_#1^#2{\raise.5pt\hbox{\footnotesize${\displaystyle\sum_{#1}^{#2}}$}}
\def\bfrac#1#2{\raise.5pt\hbox{\footnotesize$\dfrac{#1}{#2}$}}
\def\ch{\hbox{ch}}
\def\sh{{\rm sh}}
\def\sign{\hbox{sign}}
\def\bP{{\bold P}}
\def\bR{{\bold R}}
\def\bE{{\bold E}}
\def\cF{{\cal F}}
\def\cD{{\cal D}}
\def\cC{{\cal C}}
\def\cG{{\cal G}}
\def\ph{\varphi}
\def\ep{\varepsilon}
\def\ga{\gamma}
\def\si{\sigma}
\def\be{\beta}
\def\la{\lambda}
\def\om{\omega}
\def\del{\delta}
\def\wt{\widetilde}
\def\({(\hskip-1pt(}
\def \R{{\rm I}\mspace{-4mu}{\rm R}}
\def \B1{\mathbf{ 1}\mspace{-4.5mu}{\rm I}}
\def \BL{\mathbb{ L}}
\begin{document}

\title{On the local time process of a skew Brownian motion}

\author{Andrei Borodin\thanks{St. Petersburg State University, Russia, National Research University - Higher School of Economics, Russia, St. Petersburg Department of Steklov Mathematical Institute, Russia, E-mail: borodin@pdmi.ras.ru}\quad
and\quad Paavo Salminen\thanks{ Abo Akademi University, Faculty of Science and Engineering, Finland,
 E-mail: paavo.salminen@abo.fi}} 
\maketitle
\abstract{We derive  a Ray-Knight type theorem for the local time process (in the space variable) of a skew  Brownian motion up to an independent exponential time.  It is known that the local time seen as a density of the occupation measure and taken with respect to the Lebesgue measure has a discontinuity at the skew point (in our case at zero), but the local time taken with respect to the speed measure is continuous. In this paper we discuss this discrepancy by characterizing the dynamics of the local time process in  both these cases. The Ray-Knight type theorem is applied to study integral functionals of the local time process of the  skew Brownian motion. In particular, we  determine the distribution of the maximum of the local time process up to a fixed time,  which can be seen as the main new result of the paper.}\\

\noindent{\bf AMS Subject Classification:} 60J65, 60J60, 60J55\\

\noindent{\bf Keywords:}  Brownian motion, local time, Bessel function, inversion formula for Laplace transforms.

\thispagestyle{empty} \clearpage \setcounter{page}{1}

\section{Introduction}
\label{intro}

By a skew Brownian motion, SBM, with parameter $\beta\in(0,1)$ we mean a regular diffusion (in the sense of It\^o and McKean  \cite{itomckean74}, see also \cite{borodinsalminen15}) 
associated with the speed measure $m_\be$ and the scale function $S_\be$ given by
\begin{equation}
\label{e00}
m_\beta(dx) =\begin{cases}
4\beta dx, &x>0,\\
4(1-\beta)dx, &x<0,
\end{cases}
\end{equation}
and 
\begin{equation}
\label{e01}
S_\beta (x) =\begin{cases}
x/(2\beta), &x\geq0,\\
x/(2(1-\beta)), &x\leq0,
\end{cases}
\end{equation}
respectively. It is also assumed that $m_\beta(\{0\})=0.$ The notation $W_\beta=(W_\beta(t))_{t\geq 0}$ is used for SBM with parameter $\beta.$ Clearly, $W_{1/2}$ is a standard Brownian motion and, for simplicity, we omit in this case the index $1/2. $ From the general theory of diffusions we may deduce that the infinitesimal generator associated with the above scale and speed is  
\begin{align}
\label{e0}
\cG_\beta f(x)=\Big(\frac{d }{dm_\beta}\frac{d}{ dS_\beta}f\Big)(x)=\bfrac12\bfrac{d^2f}{dx^2}(x) , \quad  x\not= 0,&&\\
 \label{e00}\hskip-2cm
\cG_\beta f(0)=\bfrac{1}{2}f''(0+)=\bfrac{1}{2}f''(0-).\hskip2.3cm&&
 \end{align}
with the domain 
\begin{equation}
\label{e1}
 \cD=\{f:\ f,\ \cG_\beta f\in\cC_b(\bR),\ (1-\be) f'(0-)=\be f'(0+)\}.
 \end{equation}

SBM was introduced  by It\^o and Mckean in  \cite{itomckean74}  Problem 1, p. 115.  
To discuss this briefly, consider a reflected Brownian motion $|W|=(|W(t)|)_{t\geq 0}$ on $\R_+$ initiated from 0  and its excursions from 0 to 0.  A new sample path is constructed by multiplying a given  (positive) excursion of $|W|$ with $+1$ or $-1$ with some probability $p\in(0,1)$ and $1-p,$ respectively. It is assumed that the multiplications are done independently on different excursions. The resulting process is called a skew Brownian motion and  the problem posed  to the reader in  \cite{itomckean74} is to calculate its  scale function and speed measure. 

Recall also that $W_\beta$ is the unique strong solution of the stochastic equation (cf.  Harrison and Shepp \cite{harrisonshepp81})
$$
W_\be(t)=x+ W(t)+(2\be-1)\ell_\be(t,0),\qquad\qquad W_\be(0)=x, 
$$
where $W=(W(t))_{t\geq 0}$ is a given standard BM started from 0 and  $\ell_\be(t,0)$ is the local time of $W_\be$ at zero with respect to the Lebesgue measure, i.e., 
$$
 \ell_\be(t,0):=\lim_{\varepsilon\downarrow 0}\frac{1}{2\varepsilon} \int_0^t {\bf 1}_{(-\varepsilon, +\varepsilon)}(W_\beta(s))\,ds.
 $$

The main interest in this paper is in the local time process of $W_\beta.$ Our first task is to investigate the relationship between the local times normalized,  on the one hand, with respect to Lebesgue measure  and, on the other hand, with respect to speed measure $m_\beta.$ As shown in Walsh \cite{walsh78a}  in the first case the local time as a process in the space variable is discontinuous at 0. From the general diffusion theory it is known that in the second case the local time process is continuous at~ 0.  We clarify in the next section  this discrepancy by finding the multiplying constants connecting the different local times. In Ramirez et al. \cite{RTW15} the phenomenon is studied for general regular one-dimensional diffusions.   

In section 3 we present the Ray-Knight type theorems showing the diffusion structure of the local time process. From the excursion description of SBM one may anticipate that the diffusion structure is similar as in the case of a standard BM only difference coming from the distribution of the one sided local times at~0. This is indeed
 the case, and we give a proof in Appendix based on the Feynman-Kac formula.  However, we wish to underline that the approach introduced  in Ray \cite{ray63} and further developed  in Walsh \cite{walsh78}  Theorem 4.1  and McGill \cite{mcgill82} Theorem 3.3  cannot be ``directly'' applied to deduce the Ray-Knight type result for SBM since the scale function of SBM is not differentiable.  For a Ray-Knight type theorem for the local time at 0 of SBM viewed as a function of the initial state of SBM, see \cite{burdzychen01}.
 
 The final section contains the main result where we calculate the distribution of the maximum of the local time process. The corresponding distribution of standard BM is known, see Borodin \cite{borodin85} and Cs\'aki, F{\"o}ldes and Salminen \cite{csakifoldessalminen87}.  Our present work extends these results and formulas for SBM and can be applied to study, e.g., increments of the local time of SBM as is done for standard BM in the paper by Cs\'aki and  F{\"o}ldes   \cite{csakifoldes86}. Further potential applications may be found  in the research on the most favorite sites of a stochastic process, see  Bass and  Griffin \cite{bassgriffin85}, Shi and Toth \cite{shitoth00} and references therein. In fact,  our interest in the problem was arisen by Endre Cs\'aki and Antonia  F{\"o}ldes who in \cite{csakietal17} together with Cs{\"o}g{\"o} and R\'ev\'esz studied local times of BM on a multiray (also called a spider).

We refer to Lejay \cite{lejay06} for a survey on SBM and also for many references on the topic so far. SBM has found applications in modeling of different phenomena like  diffusion of a pollutant , brain imaging, and population ecology, for references see \cite{lejay06}. For more recent papers  we refer to Appuhamillage et al. \cite{appuhamillage11}  for  results on the joint distribution of occupation and local times and applications in the dispersion of a solute concentration, to  Lejay and  Pichot \cite{lejaypichot12} for exact simulation of SBM, to Lejay, Mordecki and Torres \cite{lejaymordeckitorres2014} for statistical aspects, and to Alvarez and Salminen \cite{alvarezsalminen17} and Rosello \cite{rosello12} for applications in financial mathematics.

\section{Continuity vs discontinuity}
\label{cont}

Our aim is to study the continuity of the local time viewed as a process in the space variable. For notational simplicity, we let $m:=m_\beta$ and $S:=S_\beta.$ The local time of $W_\beta$ with respect to the Lebesque measure is defined for all $x\in\R$ as 
$$
 \ell_\be(t,x):=\lim_{\varepsilon\downarrow 0}\frac{1}{2\varepsilon} \int_0^t {\bf 1}_{(x-\varepsilon, x+\varepsilon)}(W_\beta(s))\,ds,
 $$
where the limit exists a.s. We define similarly
$$
 L_\beta(t,x):= L(t,x):=\lim_{\varepsilon\downarrow 0}\frac{1}{m^*((x-\varepsilon, x+\varepsilon))} \int_0^t {\bf 1}_{(x-\varepsilon, x+\varepsilon)}(W_\beta(s))\,ds
 $$
where  
\begin{equation}
\label{mstar0}
m^*(dx):=\frac12  m(dx) =\begin{cases}
2\beta dx, &x>0,\\
2(1-\beta)dx, &x<0.
\end{cases}
\end{equation}
Notice that for small values on $\varepsilon$
\begin{equation}
\label{mstar}
m^*((x-\varepsilon, x+\varepsilon))=\begin{cases}
4\beta \varepsilon, &x>0,\\
2\varepsilon, & x=0,\\
4(1-\beta)\varepsilon, &x<0.
\end{cases}
\end{equation}

The basic result in the next theorem, which says that the local time $\ell_\beta$ is discontinuous at 0, was proved in \cite{walsh78a} using the It\^o-Tanaka formula. In \cite{walsh78a} it is pointed out that a proof could have also been done more directly based the occupation time formula and elementary calculus. Our proof is perhaps more on these lines.  Notice, however,  that the speed measure we are using is not the same as in \cite{walsh78a}, cf.  Remark (\ref{alpha}) below. 

\begin{theorem}
\label{prop10}
Let $L_\be$ and $\ell_\be$ be as above. Then 
\begin{description}
\item{a)} $(t,x)\mapsto   L_\be(t,x)$ is continuous a.s.,
\item{b)}  $x\mapsto    \ell_\be(t,x)$ is for every $t>0$  continuous a.s for $x\not=0$ but discontinuous a.s. at $0,$
\item{c)}  
\begin{equation}
\label{local}
 \ell_\be(t,x)= \begin{cases}
2\beta\,   L_\be(t,x), &x>0,\\
 L_\be(t,0), &x=0,\\
2(1-\beta)\,   L_\be(t,x), &x<0,
\end{cases}
\end{equation}
and
\begin{align}
\label{ee11}\ell_\be(t,0+)-\ell_\be(t,0-)&=2(2\be-1)L_\be(t,0),\\
\label{ee12}\ell_\be(t,0+)+\ell_\be(t,0-)&=2L_\be(t,0)=2\ell_\be(t,0).
\end{align} 
\end{description}

\end{theorem}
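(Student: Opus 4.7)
My plan is to treat the three statements in the order (c), (a), (b), since (c) is an immediate consequence of the definitions, (a) then rests on general diffusion theory, and (b) follows by combining (a) with (c).

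First I would deduce (c) directly from the defining limits. For any $x>0$, once $\varepsilon$ is small enough that $(x-\varepsilon,x+\varepsilon)\subset(0,\infty)$, formula \eqref{mstar} gives $m^*((x-\varepsilon,x+\varepsilon))=4\be\varepsilon=2\be\cdot(2\varepsilon)$, so dividing the occupation integral by $m^*$ differs from dividing by $2\varepsilon$ exactly by the factor $1/(2\be)$. Passing to the limit then yields $L_\be(t,x)=\ell_\be(t,x)/(2\be)$ on $(0,\infty)$, and the analogous identity with $2(1-\be)$ in place of $2\be$ on $(-\infty,0)$; at $x=0$ one has $m^*((-\varepsilon,\varepsilon))=2(1-\be)\varepsilon+2\be\varepsilon=2\varepsilon$, which gives $L_\be(t,0)=\ell_\be(t,0)$. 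This establishes the first identity in (c).

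The main work, and the hardest step, is then (a): the joint continuity of $(t,x)\mapsto L_\be(t,x)$. Here I would invoke the general fact that for a regular one-dimensional diffusion the occupation density with respect to the speed measure admits a bi-continuous version; concretely, starting from the occupation time formula
\[
\int_0^t f(W_\be(s))\,ds=\int_\R f(y)\,L_\be(t,y)\,m(dy),
\]
one verifies via a Kolmogorov-type moment bound that the density is locally H\"older in $(t,x)$. For a self-contained SBM argument one can also decompose $W_\be$ into its excursions away from $0$: each signed excursion coincides with $\pm|W_\be|$, the process $|W_\be|$ is reflecting Brownian motion (whose local time field is classically jointly continuous), and $L_\be(t,x)$ for $x\neq 0$ is obtained by summing these continuous contributions over the positive (resp.\ negative) excursions selected by the independent Bernoulli($\be$) signs. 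Continuity across $x=0$ is then inherited from the continuity of the local time at $0$ of reflecting BM. This is the only step where anything more than book-keeping is needed.

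Granted (a), assertion (b) follows at once from (c): the identity $\ell_\be(t,x)=2\be L_\be(t,x)$ on $(0,\infty)$ and $\ell_\be(t,x)=2(1-\be)L_\be(t,x)$ on $(-\infty,0)$ shows that $x\mapsto\ell_\be(t,x)$ is continuous away from $0$, and taking one-sided limits at $0$ gives
\[
\ell_\be(t,0+)=2\be\, L_\be(t,0),\qquad \ell_\be(t,0-)=2(1-\be)\,L_\be(t,0).
\]
For $t>0$ one has $L_\be(t,0)>0$ a.s.\ since SBM accumulates local time at $0$, so these one-sided limits are a.s.\ distinct whenever $\be\neq 1/2$, establishing the discontinuity in (b). Subtracting and adding these two relations, and using $L_\be(t,0)=\ell_\be(t,0)$ from (c), produces the jump identities \eqref{ee11} and \eqref{ee12}.
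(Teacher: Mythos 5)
Your ordering (c) $\Rightarrow$ (b) given (a) is exactly how the paper argues: (\ref{local}) is read off from (\ref{mstar}), and (b), (\ref{ee11}), (\ref{ee12}) follow by taking one-sided limits once (a) is known; your remark that the discontinuity in (b) requires $L_\be(t,0)>0$ and $\be\neq 1/2$ is a point the paper leaves implicit. The problem is part (a), which you yourself call the only real work and which you do not actually prove. Your first route is an appeal to the general theorem that the occupation density of a regular diffusion with respect to its speed measure has a bicontinuous version, supported only by an unexecuted ``Kolmogorov-type moment bound''; as a citation this could be defensible (the paper acknowledges the fact is known from general diffusion theory, cf.\ It\^o--McKean and Rogers--Williams), but it is an assertion, not an argument. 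The paper instead proves (a) for SBM explicitly, via the random time change of Propositions \ref{prop11} and \ref{prop12}: $W_\be(t)\stackrel{d}{=}S^{-1}(\widehat W(\gamma_t))$ with $\gamma$ the inverse of $A_t=\int_0^t\widehat m_Y(\widehat W(s))\,ds$, which yields the identification $L_\be(t,x)\stackrel{d}{=}\ell^{BM}\big(\gamma_t,S(x)\big)$ and hence joint continuity from the joint continuity of Brownian local time.

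Your second, ``self-contained'' excursion argument has a genuine gap at precisely the crux. Continuity away from $0$ is not the issue; the issue is continuity of $L_\be$ across $0$, equivalently the exact relations $\ell_\be(t,0+)=2\be\,\ell_\be(t,0)$ and $\ell_\be(t,0-)=2(1-\be)\,\ell_\be(t,0)$. The sentence ``continuity across $x=0$ is then inherited from the continuity of the local time at $0$ of reflecting BM'' begs this question: the same termwise reasoning, applied to $\ell_\be$ instead of $L_\be$, would ``prove'' that $\ell_\be$ is continuous at $0$, contradicting (b). For fixed $t$ the field at level $x>0$ is a sum over infinitely many positive excursions, each of whose contributions tends to $0$ as $x\downarrow 0$, while the sum tends to the strictly positive limit $2\be\,\ell_\be(t,0)$; so continuity at $0$ cannot be inherited term by term, and identifying the constant $2\be$ (that exactly the $\be$-fraction of excursion mass survives the flipping in the limit) is the entire content of the statement. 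To close (a) you need either the paper's time-change computation, a genuine downcrossing or moment argument carried out near $0$, or an honest citation of the general result with the identification of the symmetric-window limit defining $L_\be$ with that continuous occupation density.
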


\noindent
The proof of Proposition \ref{prop10} is based on the random time change-techniques. For this approach we study first SBM in natural scale, i.e., the diffusion $Y=(Y(t))_{t\geq 0} := (S(W_\beta(t))_{t\geq 0}.$    

\begin{proposition} \label{prop11}
The speed measure $m_Y$ of SBM in natural scale is 
\begin{equation}
\label{speed}
m_Y(dx):=\begin{cases}
2\,(2\beta)^2 dx, &x>0,\\
2\,(2(1-\beta))^2 dx, &x<0.
\end{cases}
\end{equation}

\end{proposition}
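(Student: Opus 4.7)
The plan is to use the It\^o--McKean general principle that if a regular diffusion $X$ has scale function $S$ and speed measure $m$, then $Y := S(X)$ is a regular diffusion in natural scale whose speed measure equals the pushforward $m \circ S^{-1}$; see \cite{itomckean74}. Since $S_\beta$ is a strictly increasing homeomorphism of $\bR$ onto itself with $S_\beta(0)=0$ and piecewise linear inverse
$$
S_\beta^{-1}(y) = \begin{cases} 2\beta\, y, & y \geq 0, \\ 2(1-\beta)\, y, & y \leq 0, \end{cases}
$$
the identification of $m_Y$ on each open half-line reduces to a direct change of variables.

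To carry this out, I would test $m_Y$ against a bounded Borel function $f$ supported in $(0,\infty)$: by the pushforward definition together with \eqref{e00},
$$
\int f(y)\, m_Y(dy) = \int f(S_\beta(x))\, m_\beta(dx) = \int_0^\infty f(x/(2\beta))\cdot 4\beta\, dx = 8\beta^2 \int_0^\infty f(y)\, dy,
$$
which yields $m_Y(dy) = 2(2\beta)^2\, dy$ on $(0,\infty)$. The symmetric computation on $(-\infty,0)$ gives $m_Y(dy) = 2(2(1-\beta))^2\, dy$, and $m_Y(\{0\}) = 0$ because $m_\beta(\{0\}) = 0$.

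The one point requiring separate attention is $y = 0$, where one must verify that $Y$ is genuinely in natural scale, i.e., that its generator domain imposes $g'(0-) = g'(0+)$ rather than some weighted gluing condition that would make $0$ sticky or skew. For $f \in \cD$ set $g(y) := f(S_\beta^{-1}(y))$; the chain rule gives $g'(0+) = 2\beta\, f'(0+)$ and $g'(0-) = 2(1-\beta)\, f'(0-)$, so Walsh's condition $(1-\beta)\, f'(0-) = \beta\, f'(0+)$ from \eqref{e1} translates precisely into $g'(0-) = g'(0+)$. Combined with the half-line computation this identifies $Y$ as the regular diffusion on $\bR$ in natural scale with speed measure \eqref{speed}. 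This boundary check at the skew point is the only subtlety; away from $0$ the claim is immediate from the piecewise linearity of $S_\beta$.
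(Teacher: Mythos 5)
Your proof is correct, but it runs along a genuinely different line than the paper's. The paper argues with stochastic calculus: assuming $W_\beta(0)=x>0$ and stopping at the exit time $H_{a,b}$ of an interval $(a,b)\subset(0,\infty)$, it applies It\^o's formula to $Y=S(W_\beta)$ to get $Y(t\wedge H_{a,b})-Y(0)=\int_0^{t\wedge H_{a,b}}\frac{1}{2\beta}\,dW(s)$, so that $Y$ is locally a martingale with diffusion coefficient $1/(2\beta)$ on the positive half-line (and $1/(2(1-\beta))$ on the negative one), from which the speed measure $2(2\beta)^2dx$, resp.\ $2(2(1-\beta))^2dx$, is read off; the point $0$ is handled only by the remark that $Y$ has no sticky points, so $m_Y(\{0\})=0$. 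You instead invoke the general transformation rule that under the scale map the speed measure of $S(X)$ is the image measure $m\circ S^{-1}$, and then do a one-line change of variables on each half-line, $4\beta\cdot 2\beta=2(2\beta)^2$ and $4(1-\beta)\cdot 2(1-\beta)=2(2(1-\beta))^2$, which indeed reproduces \eqref{speed}; your chain-rule verification that the gluing condition $(1-\beta)f'(0-)=\beta f'(0+)$ from \eqref{e1} becomes $g'(0-)=g'(0+)$ for $g=f\circ S_\beta^{-1}$ is a correct and welcome explicit treatment of the skew point, which the paper leaves implicit. What each route buys: the paper's It\^o computation simultaneously exhibits the SDE structure of $Y$ away from $0$, which feeds directly into the time-change construction of Proposition 2.3, whereas your argument is more elementary and purely analytic, needing no semimartingale representation of $W_\beta$ — but it does lean on the pushforward principle as a citable black box (It\^o--McKean, or Rogers--Williams), so if the referee wanted a self-contained proof you would still need to justify that rule, which is essentially what the paper's It\^o calculation does in this special case.
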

\begin{proof} Assume first that $W_\beta(0)=x>0$ and define for $0<a<x<b$ 
$$
H_ {a,b}:=\inf\{t\,:\, W_\beta(t)\not\in(a,b)\}.
$$ 
Since $S(y)= y/{2\be}$ for $y>0$ we have by  It\^o's formula
\begin{align*} 
Y(t\wedge H_{a,b})-Y(0)&=S(W_\beta(t\wedge H_{a,b}))-S(x)\\
&=\int_0^{t\wedge H_{a,b}}S'(W_\beta(s))\, dW_\beta(s)\\
&=\int_0^{t\wedge H_{a,b}}\frac 1{2\beta} \, dW(s),
\end{align*}
from which we may read off the expression for the speed measure on $(0,\infty).$ Similar calculations reveal the claimed formula  on $(-\infty,0).$ We remark also that it holds $m_Y(\{0\})=0$ since $Y$ does not -- by construction -- have any sticky points.  
\end{proof}

\begin{proposition} \label{prop12}
Let $(\widehat W(t))_{t\geq 0}$ and $( W_\beta(t))_{t\geq 0}$ be a standard  and a skew Brownian motion, respectively, both   starting from 0.  Introduce
\begin{equation*}
\label{speed2}
\widehat m_Y(x):=(2\be)^2{\bf 1}_{[0,+\infty)}(x)+(2(1-\be))^2{\bf 1}_{(-\infty,0)}(x),
\end{equation*}
the additive functional  
$$
A_t:=\int_0^t \widehat m_Y(\widehat W(s))\, ds
$$
and its inverse
$$
\gamma_t:=\inf\{s\,:\, A_s>t\}.
$$
Then 
\begin{equation}
\label{eqlaw}
W_\beta(t)\stackrel{d}{=} S^{-1}( \widehat W(\gamma_t)) .
\end{equation}
where $\stackrel{d}{=} $ means that the processes on the left and the right hand side are identical in law.
\end{proposition}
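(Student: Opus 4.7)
The plan is to reduce the identity \eqref{eqlaw} to the classical time-change representation of a one-dimensional diffusion in natural scale as a time-changed Brownian motion. Set $Y(t):=S(W_\beta(t))$ as in Proposition \ref{prop11}. Since $S^{-1}$ is a measurable bijection of $\R$ fixing $0$, the assertion is equivalent to
\[
\bigl(Y(t)\bigr)_{t\ge 0}\stackrel{d}{=}\bigl(\widehat W(\gamma_t)\bigr)_{t\ge 0},
\]
and by Proposition \ref{prop11} the left-hand side is a regular diffusion in natural scale with speed measure $m_Y$ given by \eqref{speed}, started at $0$.

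For the right-hand side I would invoke Volkonski\u{\i}'s time-change theorem (see e.g.\ It\^o--McKean \cite{itomckean74}, \S5.1): if $B$ is a regular diffusion in natural scale with speed measure $\mu$ and $\rho:\R\to(0,\infty)$ is locally bounded and bounded away from zero on compacts, then the time-changed process $B(\gamma_t)$, with $\gamma$ the right-inverse of $\int_0^{\cdot}\rho(B_s)\,ds$, is again a regular diffusion in natural scale, now with speed measure $\rho(x)\,\mu(dx)$. Applied to $B=\widehat W$, whose speed measure in the It\^o--McKean normalisation is $\mu(dx)=2\,dx$ so that the generator is $\tfrac12\partial^2_{xx}$, and to $\rho=\widehat m_Y$, the theorem produces a diffusion in natural scale with speed measure $2\widehat m_Y(x)\,dx=m_Y(dx)$, matching $Y$. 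Uniqueness in law of a regular one-dimensional diffusion given its scale, speed and starting point then gives $\widehat W(\gamma_{\cdot})\stackrel{d}{=}Y$, and \eqref{eqlaw} follows by applying $S^{-1}$ pathwise. The initial conditions match trivially, since $\widehat W(\gamma_0)=0=S(0)=Y(0)$.

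The main point to verify is that Volkonski\u{\i}'s theorem genuinely applies across the skew point $x=0$, where $\widehat m_Y$ is discontinuous. Here this is painless: $\widehat m_Y$ takes only the two strictly positive values $(2\beta)^2$ and $(2(1-\beta))^2$, so it is globally bounded above and below by positive constants. Consequently $A_t$ is a continuous, strictly increasing additive functional with $A_t\to\infty$ almost surely, and $\gamma$ is its continuous inverse; in particular the jump of $\widehat m_Y$ at $0$ is absorbed into an infinitesimal change of speed and causes no pathology, exactly as one expects in view of the local-time asymmetry already quantified in Theorem \ref{prop10}.
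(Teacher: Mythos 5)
Your argument is correct, but it follows a different route from the paper. The paper proves the proposition by stochastic calculus: it observes that $Z(t):=\widehat W(\gamma_t)$ is a continuous local martingale with quadratic variation $\gamma_t$, computes $\gamma'_t=1/\widehat m_Y(Z(t))$ (using that $\widehat W$ spends Lebesgue-null time at $0$), and then uses the martingale representation theorem to exhibit $Z$ as a weak solution of the SDE $dZ(t)=\bigl(\tfrac1{2\beta}{\bf 1}_{\{Z>0\}}+\tfrac1{2(1-\beta)}{\bf 1}_{\{Z<0\}}\bigr)dW(t)$, whence $Z$ is a diffusion in natural scale with speed measure (\ref{speed}) and is identified with $Y=S(W_\beta)$ via Proposition \ref{prop11}. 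You instead quote the classical Volkonski\u{\i}/It\^o--McKean time-change theorem (essentially the result the paper itself cites in a later remark, \cite{itomckean74} p.~174 and \cite{rogerswilliams87} (49.1)), which immediately gives that $\widehat W(\gamma_\cdot)$ is a regular diffusion in natural scale with speed measure $2\widehat m_Y(x)\,dx=m_Y(dx)$, and you then conclude by uniqueness in law of a regular diffusion determined by its scale, speed and starting point, applying $S^{-1}$ pathwise. Your normalization bookkeeping ($\mu(dx)=2\,dx$ for BM, so the new speed is $\rho\,\mu=m_Y$) is consistent with the paper's conventions, and your remark that the discontinuity of $\widehat m_Y$ at $0$ is harmless (it is bounded above and below by positive constants, so $A$ is continuous, strictly increasing and diverges, and the time-change theorem in any case covers general speed measures via additive functionals) addresses the only delicate point. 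What your approach buys is brevity and robustness at the skew point, since you never need a pointwise derivative of $A$ nor weak uniqueness for an SDE with discontinuous coefficient; what the paper's approach buys is self-containedness and an explicit SDE representation of the time-changed process, relying only on the martingale representation theorem rather than on the general time-change theorem as a black box.
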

\begin{proof} Since $\gamma_t$ is for all $t\geq 0$ a finite stopping time the process 
$$
Z:=(Z(t))_{t\geq 0}:=(\widehat W(\gamma_t))_{t\geq 0} 
$$
is a local  continuous $\cF_{\gamma_t}$-martingale with the quadratic variation process $(\gamma_t)_{t\geq 0}.$ Notice that 
$$
A'_t:=\frac d{dt} A_t= \widehat m_Y(\widehat W(t))\ {\rm a.s.},
$$
where we used that a.s. $W(t)\not= 0. $ Because $\gamma_t$ is the inverse of $A_t$, it holds
$$
\gamma'_t= \frac 1{\widehat m_Y(\widehat W(\gamma_t))}= \frac 1{\widehat m_Y(Z(t))}.
$$
Consequently, by the martingale representation theorem we may write for a Brownian motion $W$ (recall also that  $Z(0)=0)$ 
\begin{align*}
Z(t)&=\int_0^t \Big(\gamma'_s\Big)^{1/2} \,dW(s)\\
&=\int_0^t \Big(\frac 1{2\beta}{\bf 1}_{\{Z(s)>0\}} +\frac 1{2(1-\beta)}{\bf 1}_{\{Z(s)<0\}}\Big)\,dW(s), 
\end{align*}
and $Z$ can be seen as a weak solution of a SDE, and, therefore, a diffusion with the speed measure  as given in (\ref{speed}).
Hence, $Z$ is identical in law with a SBM in natural scale, and (\ref{eqlaw}) follows. 
\end{proof}

\noindent 
{\sl Proof of Theorem \ref{prop10}. } Let $\widetilde L_\be(t,x),\, t\geq 0, x\in\R,$ denote the local time of $(S^{-1}( \widehat W(\gamma_t)))_{t\geq 0}$ with respect to the measure $m^*,$ i.e.,  
$$
\widetilde L_\be(t,x):=\lim_{\varepsilon\downarrow 0} \frac{1}{m^*((x-\varepsilon, x+\varepsilon))}   \int_0^t {\bf 1}_{(x-\varepsilon, x+\varepsilon)}
 (S^{-1}(\widehat W(\gamma_s))\,ds.
 $$
Then from (\ref{eqlaw}) follows that 
 $\widetilde L_\be$ and $L_\be$ are identical in law as processes in $(t,x).$ We consider now  $\widetilde L_\be$. 
 For $x>\varepsilon>0$ it holds 
\begin{align*}
\{x-\varepsilon < S^{-1}(\widehat W(\gamma_s))<x+\varepsilon\}\ & \Leftrightarrow\ 
\{S(x-\varepsilon) < \widehat W(\gamma_s)<S(x+\varepsilon)\}\\
& \Leftrightarrow\   \Big\{\frac{x-\varepsilon}{2\beta} < \widehat W(\gamma_s)<\frac{x+\varepsilon}{2\beta}\Big\},
\end{align*}
and, hence, letting $O:={\displaystyle \Big(\frac{x-\varepsilon}{2\beta}, \frac{x+\varepsilon}{2\beta }\Big)}$
\begin{align*}
 \int_0^t {\bf 1}_{(x-\varepsilon, x+\varepsilon)}
 (S^{-1}(\widehat W(\gamma_s))\,ds 
 &= \int_0^t {\bf 1}_O(\widehat W(\gamma_s))\,ds\\
&= \int_0^{\gamma_t} \widehat m(\widehat W(u)) {\bf 1}_O(\widehat W(u))\,du\\
&= 4\beta^2\,\int_0^{\gamma_t}  {\bf 1}_O(\widehat W(u))\,du,
\end{align*}
where we have substituted $ s=A_u.$ Consequently, using (\ref{mstar}),
\begin{align*}
\frac{1}{m^*((x-\varepsilon, x+\varepsilon))}  \int_0^t {\bf 1}_{(x-\varepsilon, x+\varepsilon)}
& (S^{-1}(\widehat W(\gamma_s))\,ds\\
&=\frac 1{4\beta\varepsilon}\,4\beta^2\,\int_0^{\gamma_t}  {\bf 1}_O(\widehat W(u))\,du\\
&=\frac 1{|O|}\,\int_0^{\gamma_t}  {\bf 1}_O(\widehat W(u))\,du,
\end{align*}
where $|O|=\varepsilon/\beta$ is the length of $O.$ Letting here $\varepsilon\downarrow 0$ yields 
\begin{align*}
\widetilde L_\beta(t,x)&{=}\lim_{\varepsilon\downarrow 0} \frac 1{|O|}\,\int_0^{\gamma_t}  {\bf 1}_O(\widehat W(u))\,du
=\ell^{BM}\Big(\gamma_t, \frac x{2\beta}\Big),
\end{align*}
 where $\ell^{BM}$ denotes the local time of a standard BM with respect to the Lebesgue measure. Analogously, it can be proved that for $x<0$
$$  
\widetilde L_\beta(t,x){=}\ell^{BM}\Big(\gamma_t, \frac x{2(1-\beta)}\Big), 
 $$
 and for $x=0$
$$  
\widetilde L_\beta(t,x){=}\ell^{BM}(\gamma_t, 0). 
 $$ 
 Since $(t,x)\mapsto \ell^{BM}(t, x)$ is (jointly) continuous a.s. and $\widetilde L_\be$ is expressed as above in terms of $\ell^{BM}$
  it follows that  $(t,x)\mapsto \widetilde L_\beta(t, x)$  and, hence, also $(t,x)\mapsto   L_\beta(t, x)$ are continuous a.s. This proves a).  Clearly, (\ref{local}) follows directly from (\ref{mstar}) which when combined with a) yields b).   \hskip2.2cm $\square$ 

\begin{remark} 
\label{alpha}
Notice that  we could alternatively use, e.g.,  
$$
m_\alpha(dx) =\begin{cases}
\frac {1}{1-\alpha}  dx, &x>0,\\
\frac {1}{\alpha} dx, &x<0,
\end{cases}
$$
and 
$$
S_\alpha (x) =\begin{cases}
2(1-\alpha)x, &x\geq0,\\
2\alpha x, &x\leq0,
\end{cases}
$$
as the speed measure and the scale function, respectively. Also here $\alpha\in (0,1).$ The generator with this normalization  is as in (\ref{e0}) and (\ref{e1}) -- one simply replaces $\beta$ with $\alpha.$ The  $\alpha$-normalization is used, e.g., in Walsh \cite{walsh78a} and  Harrison and Shepp \cite{harrisonshepp81},  and the $\beta$-normalization, e.g., in    \cite{itomckean74} p. 115 and \cite{borodinsalminen15}. In this paper we work with the $\beta$-normalization. In particular,  the measure $m_\beta $ is used when the local time is considered with respect to $\frac 12\times$the speed measure. 
\end{remark}

 In the remark to follow we explicitly display for a fairly general diffusion the random time change, the scale change and the normalization of the local time such that the local time has a clean representation in terms of the Brownian local time. Such a representation is well-known  in the literature on diffusions, see  \cite{itomckean74} p. 174 and \cite{rogerswilliams87} (49.1) Theorem p. 289. However, we find it worthwhile to formulate the result in case the diffusion is not in natural scale.   
  
\begin{remark} Let $X=(X(t))_{t\geq 0}$ be a diffusion on $\R$ satisfying the SDE
\begin{equation}
\label{sde1}
dX(t)=a(X(t))\,dW(t)+ b(X(t))\,dt,
\end{equation} 
where coefficients $a$ and $b$ are nice functions, e.g., continuous, bounded, and $a(x)>\varepsilon>0$ for all $x\in \R,$    so that (\ref{sde1}) has a unique (non exploding) weak solution. Then the scale function $S$ is in ${\bf C}^2,$ and the speed measure  $m_Y$ of $Y:=(S(X(t)))_{t\geq 0}$ can be calculated using It\^o's formula (cf. the proof of  Proposition  \ref{prop11}) to be for $y\in S(\R):=\{S(x)\,:\, x\in\R\}$
$$
m_Y(dy)=2 \big(S'(S^{-1}(y))a(S^{-1}(y))\big)^{-2}\,dy =:2\widehat m_Y(y)\,dy.
$$
Similarly as in the case of SBM  (cf. the proof of Proposition \ref{prop12})  $Y$ can be constructed via a random time change based on the additive functional
$$
A_t:=\int_0^t \widehat m_Y(\widehat W(s))\, ds ,
$$
where $(\widehat W(t))_{t\geq 0}$ is a Brownian motion. Letting $(\gamma_t)_{t\geq 0}$ be the inverse of $A$ we have (cf. the proof of Proposition \ref{prop12})  
\begin{equation}
\label{eqlaw1}
 X(t)\stackrel{d}{=}S^{-1}(\widehat W(\gamma_t)). 
\end{equation}   
Recall (see \cite{borodinsalminen15} p. 17) that the speed measure of $X$ is given by
$$
m(dx) =2\,a(x)^{-2}\, S'(x)^{-1}\, dx =:2\,m^*(x)\,dx.
$$
Now  the local time of $X$ with respect to $m^*(dx):=m^*(x)\,dx$ can be expressed in terms of the Brownian local time as 
\begin{align*}
L&(t,x):=\lim_{\varepsilon\downarrow 0} \frac{1}{m^*((x-\varepsilon, x+\varepsilon))} \int_0^t {\bf 1}_{(x-\varepsilon, x+\varepsilon)}(X(s))\,ds \\
&\stackrel{d}{=}\lim_{\varepsilon\downarrow 0} \frac 1{{m^*((x-\varepsilon, x+\varepsilon))}}\,\int_0^{\gamma_t} \widehat m_Y(\widehat W(u)) {\bf 1}_{(S(x-\varepsilon),S(x+\varepsilon)}(\widehat W(u))\,du\\
&=\frac {\widehat m_Y(x)\, S'(x)}{m^*(x)} \lim_{\varepsilon\downarrow 0}\frac {1}{S(x+\varepsilon)-S(x-\varepsilon)} \,\int_0^{\gamma_t}  {\bf 1}_{(S(x-\varepsilon),S(x+\varepsilon)}(\widehat W(u))\,du\\
&=\ell^{BM}\Big(\gamma_t, S(x)\Big).
\end{align*}
 \end{remark}
 


\section{Markov property}
\label{s:2}
For the rest of the paper, let  $\bP_x$ and $\bE_x$ denote the probability and the expectation
associated with  SBM initiated at $x$. The theorems in this section characterize  Ray-Knight type results for $W_\be.$  Recall that $\ell_\be$ and $L_\be$  denote the local time with respect to the Lebesgue measure and the measure $m^*$ as given in (\ref{mstar0}).


\begin{theorem}\label{th 2.1} Let $\tau$ be an exponentially distributed random variable with parameter $\lambda>0.$  Given that  $W_\be(0)=0$ and $W_\be(\tau)=z>0$ the process
$(\ell_\be(\tau,y))_{y\in\bR}$ is a strong Markov process which can be represented  in the form
$$
\ell_\be(\tau,y)=\begin{cases}
V_1(y-z) &\text{for } \ z\le y,\\
V_2(y) &\text{for } \ 0<y\le z,\\
V_3(-y) &\text{for } \ y<0,
\end{cases}
$$
where $V_k(h)$, $h\ge 0$, $k=1,2,3$, are homogeneous diffusions with  the generating operators 
$$
\BL_1=2v\Big(\bfrac{d^2}
{d v^2}-\sqrt{2\la}\bfrac{d}{d v}\Big),\quad
\BL_2=2v\Big(\bfrac{{d}^2}
{d v^2}-\sqrt{2\la}\bfrac{d}{d v}\Big)
+2\bfrac{d}{d v},
$$
$$
\BL_3=2v\Big(\bfrac{{d}^2}
{d v^2}-\sqrt{2\la}\bfrac{d}{d v}\Big)$$
respectively. The initial values given that $W_\be(\tau)=z$ satisfy the equalities 
\begin{align}
\nonumber
V_1(0)&=V_2(z),\\
\label{121}
V_2(0)&=\ell_\be(\tau,0+)
{=}2\be\,\ell_\be(\tau,0),\\
\label{122}
V_3(0)&=\ell_\be(\tau,0-)
{=}2(1-\be)\,\ell_\be(\tau,0).
\end{align}
The conditional distribution of  $\ell_\be(\tau,0)$ is exponential with parameter $\sqrt{2\lambda}$: 
\begin{equation}
\label{2.2}
\bP_0(\ell_\be(\tau,0)\geq v|W_\be(\tau)=z)
={\rm e}^{-v\sqrt{2\la}}.
\end{equation}
The processes $V_k,$ $k=1,2,3,$ are conditionally independent given their initial values. 
\end{theorem}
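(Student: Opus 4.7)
The plan is to compute, via the Feynman-Kac formula, the joint Laplace transform of finite spatial marginals of the local-time process $(\ell_\be(\tau,y))_{y\in\bR}$ on the event $\{W_\be(\tau)\in dz\}$, and then identify this Laplace functional with that of the candidate three-piece diffusion built from $V_1,V_2,V_3$. To turn spatial Laplace functionals into Feynman-Kac expectations I would invoke the occupation times identity $\int_0^\tau g(W_\be(s))ds=\int_\bR g(y)\ell_\be(\tau,y)dy$, giving, for any bounded $g\ge 0$,
$$
\bE_0\!\left[e^{-\int_\bR g(y)\ell_\be(\tau,y)dy};\,W_\be(\tau)\in dz\right] = \la\int_0^\infty e^{-\la t}\bE_0\!\left[e^{-\int_0^t g(W_\be(s))ds};\,W_\be(t)\in dz\right]dt,
$$
which is $\la$ times the Green kernel $r^g_\la(0,z)$ of SBM killed at rate $\la+g$. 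Taking $g=\sum_k\theta_k\varphi_k$ with $\varphi_k$ approximating $\delta_{y_k}$ and using the a.s.\ continuity of $y\mapsto L_\be(\tau,y)$ from Theorem \ref{prop10}, one extracts from $r^g_\la(0,z)$ the full joint Laplace transform of $(\ell_\be(\tau,y_k))_k$.

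The next step is to solve the ODE $\tfrac12 u''-(\la+g)u=0$ away from $0$ and $z$, complemented by the transmission condition $(1-\be)u'(0-)=\be u'(0+)$ inherited from the domain $\cD$ in (\ref{e1}) and by the jump in $u'$ at $z$ coming from the delta source at $W_\be(\tau)=z$. The resulting kernel $r^g_\la(0,z)$ factors into three pieces attached to $(-\infty,0)$, $(0,z]$ and $[z,\infty)$, and this factorisation is the analytic counterpart of the conditional independence of $V_1,V_2,V_3$ given their initial values. Matching the three factors against the Laplace transforms attached to $\BL_1,\BL_2,\BL_3$, each obtainable in closed form via the Riccati ODE satisfied by $\bE\,e^{-\theta V_k(h)}$, produces the generators claimed in the theorem, while the ratio of boundary values of $u$ at $0$ yields the exponential law (\ref{2.2}) of $\ell_\be(\tau,0)$ and the asymmetric initial values (\ref{121})--(\ref{122}) of $V_2$ and $V_3$.

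The main technical obstacle is the transmission condition at $0$. As stressed in the Introduction, the non-differentiability of $S_\be$ blocks the direct Ray-Walsh-McGill approach through the natural scale, and in the Feynman-Kac route its counterpart is precisely the $\be$-versus-$(1-\be)$ weighting of the one-sided derivatives of $u$ at $0$. That weighting is the only place where $\be$ enters the resolvent computation, and it is exactly what produces the $2\be$ and $2(1-\be)$ factors in (\ref{121})--(\ref{122}) while leaving the bulk dynamics of $V_1,V_2,V_3$ symmetric and identical to those arising in the classical Ray-Knight theorem for standard BM. Once this boundary condition is implemented consistently, the strong Markov property of $(\ell_\be(\tau,y))_{y\in\bR}$ and the conditional independence of $V_1,V_2,V_3$ follow from the product/exponential structure of the joint Laplace transform in the parameters $\theta_k$.
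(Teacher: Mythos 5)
Your overall route --- computing Laplace functionals of the local-time field through Feynman--Kac/resolvent ODEs with the skew transmission condition $(1-\be)u'(0-)=\be\,u'(0+)$ --- is the same one the paper follows in its appendix. However, there is a genuine gap at your central step: the kernel $r^g_\la(0,z)$ does \emph{not} factor into pieces attached to $(-\infty,0)$, $(0,z]$ and $[z,\infty)$. Writing it in terms of the fundamental solutions normalized at $0$, say $r^g_\la(0,z)\propto \ph(z)/w$, the part of $g$ living on $(-\infty,0)$ enters through $\psi'(0)$ while the part on $(0,\infty)$ enters through $\ph'(0)$ and $\ph(z)$, and these are coupled \emph{additively} through the Wronskian $w=\psi'(0)-\ph'(0)$ (and through the skew condition at $0$), not multiplicatively. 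The decoupling that encodes the conditional independence of $V_1,V_2,V_3$ holds only \emph{conditionally} on the value of the local time at the separating level; to extract it one must add a term $\ga\,\ell_\be(\tau,q)$ to the Feynman--Kac functional, which produces the derivative-jump condition (\ref{2.8})/(\ref{2.10}), solve for $G_z$ of (\ref{Gz}), and then invert the Laplace transform in $\ga$. This is precisely how the paper obtains the conditional formulas (\ref{2.16})--(\ref{2.18}) and then verifies the product identity ``(\ref{2.18}) $=$ (\ref{2.19}) $\times$ (\ref{2.20})'', which is the actual proof of the Markov property. Your sketch carries only the potential $g$ and never introduces this conditioning variable, so the asserted product/exponential structure of the joint Laplace transform is not established, and as literally stated the factorization claim is false.

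Two further steps are glossed over. First, Markovianity of $y\mapsto\ell_\be(\tau,y)$ requires the conditional factorization at an \emph{arbitrary} level $q$ (including $q>z$ and $q<0$), not only a split at the levels $0$ and $z$; the paper treats these cases separately, e.g.\ via (\ref{2.34}). Second, the identities (\ref{121})--(\ref{122}) assert that, given $\ell_\be(\tau,0)=v$, the one-sided limits $\ell_\be(\tau,0\pm)$ are a.s.\ equal to $2\be v$ and $2(1-\be)v$; ``the ratio of boundary values of $u$ at $0$'' does not deliver such an almost-sure statement. The paper proves it by the limiting delta-perturbation computation of $u_\pm(h,v)$ as $h\downarrow 0$ (alternatively one may invoke Theorem \ref{prop10}(c)), and it identifies the generators by computing the transition-function Laplace transforms $u_k(h,v)$ explicitly and checking $\partial u_k/\partial h=\BL_k u_k$ --- the Riccati matching you mention would be an acceptable substitute, but it is not carried out. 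In short, the plan points in the right direction, but the key mechanism (conditioning on the local time at the splitting level before any factorization) is missing, and without it the argument does not go through.
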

\noindent
The proof is given in the appendix. It is  based on the Feynman-Kac formula and follows the structure of a proof of the corresponding theorem for standard BM as given in \cite{borodin13}. We remark that for $ z<0 $ an analogous description is valid and can be deduced from the fact that  if $W_\be(0)=0$ then
\begin{equation}
\label{as}
W_\be(t)\stackrel{d}=-W_{1-\be}(t).
\end{equation}

\begin{remark}\label{re 2.1} {\bf (i)} Notice that the generators given above are the same as in the Ray-Knight theorem of standard BM stopped at $\tau$  (see $\cite{borodinsalminen15}$ p. 91). Moreover, the distribution in (\ref{2.2}) does not depend on $z$ or $\beta.$  These facts are intuitively plausible from the excursion description of SBM.

\noindent
{\bf (ii)} Formulae (\ref{121}) and (\ref{122}) can also be derived from (\ref{ee11}) and (\ref{ee12}).
\end{remark}

Next we describe the law of $(L_\be(\tau,y))_{y\in\bR}$ that is,  the law of the continuous version of the local time conditioned on $W_\be(\tau)=z.$ We consider only the case $z>0,$ and leave the other case to the reader. The results are obtained straightforwardly from the connection between $\ell_\be$ and $L_\be$ as displayed in (\ref{local}) using Theorem \ref{th 2.1}.

\begin{theorem}\label{new11} Given $W_\be(\tau)=z>0$ the process
$(L_\be(\tau,y))_{y\in\bR}$  is a continuous strong Markov process which can be represented  in the form
$$
L_\be(\tau,y)=\begin{cases}
U_1(y-z) &\text{for } \ z\le y,\\
U_2(y) &\text{for } \ 0\leq y\le z,\\
U_3(-y) &\text{for } \ y\leq 0,
\end{cases}
$$
where $U_k(h)$, $h\ge 0$, $k=1,2,3$, are homogeneous diffusions having generating operators
$$\widetilde\BL_1=2v\Big(\bfrac 1{2\beta}\bfrac{d^2}
{d v^2}-\sqrt{2\la}\bfrac{d}{d v}\Big),\quad
\widetilde\BL_2=2v\Big(\bfrac 1{2\beta} \bfrac{{d}^2}
{d v^2}-\sqrt{2\la}\bfrac{d}{d v}\Big)
+\bfrac 1{\beta}\bfrac{d}{d v},
$$
$$
\widetilde\BL_3=2v\Big(\bfrac 1{2(1-\beta)}\bfrac{{d}^2}
{d v^2}-\sqrt{2\la}\bfrac{d}{d v}\Big)$$
respectively. The initial values satisfy  $U_1(0)=U_2(z)$ and $U_2(0)=U_3(0),$ where 
the conditional distribution of $U_2(0)=\ell_\be(\tau,0)=L_\be(\tau,0)$ given $W_\be(\tau)=z$  is as in (\ref{2.2}). The processes $U_k,$ $k=1,2,3,$ are conditionally independent given their initial values. 
\end{theorem}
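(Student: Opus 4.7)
The plan is to reduce Theorem \ref{new11} to the already proven Theorem \ref{th 2.1} by exploiting the explicit pointwise rescaling (\ref{local}) between $\ell_\be(\tau,\cdot)$ and $L_\be(\tau,\cdot)$. Conditionally on $\{W_\be(\tau)=z\}$ with $z>0$, formula (\ref{local}) reads
$$
L_\be(\tau,y)=\begin{cases}\ell_\be(\tau,y)/(2\be),& y>0,\\ \ell_\be(\tau,0),& y=0,\\ \ell_\be(\tau,y)/(2(1-\be)),& y<0,\end{cases}
$$
so if $V_1,V_2,V_3$ denote the diffusions supplied by Theorem \ref{th 2.1}, it is natural to set $U_1:=V_1/(2\be)$, $U_2:=V_2/(2\be)$ and $U_3:=V_3/(2(1-\be))$. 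The continuity of $L_\be$ at $0$ (Theorem \ref{prop10}(a)) together with the identities $V_2(0)=2\be\,\ell_\be(\tau,0)$ and $V_3(0)=2(1-\be)\,\ell_\be(\tau,0)$ from (\ref{121})--(\ref{122}) guarantees that this piecewise construction actually glues together into a single continuous process on $\bR$, which coincides with $L_\be(\tau,\cdot)$.

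First I would compute the generators of the rescaled processes. For a constant $c>0$, if $V$ is a diffusion on $[0,\infty)$ with generator $\BL=2v\bigl(d^2/dv^2-\sqrt{2\la}\,d/dv\bigr)+\ga\, d/dv$, then $U:=V/c$ has generator obtained by applying $\BL$ to $g(v):=f(v/c)$ at $v=cu$; since $g'(v)=f'(u)/c$ and $g''(v)=f''(u)/c^2$, a direct calculation yields
$$
\widetilde\BL f(u)\,=\,2u\Bigl(\bfrac{1}{c}f''(u)-\sqrt{2\la}\,f'(u)\Bigr)+\bfrac{\ga}{c}f'(u).
$$
Specialising to $(c,\ga)=(2\be,0)$, $(2\be,2)$ and $(2(1-\be),0)$ reproduces $\widetilde\BL_1$, $\widetilde\BL_2$ and $\widetilde\BL_3$ respectively, and the strong Markov property of each $U_k$ is inherited from that of $V_k$ since affine rescaling is a homeomorphism.

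The remaining assertions follow by immediate translation. Dividing the equalities $V_1(0)=V_2(z)$, $V_2(0)=2\be\,\ell_\be(\tau,0)$ and $V_3(0)=2(1-\be)\,\ell_\be(\tau,0)$ by the relevant constants gives $U_1(0)=U_2(z)$ and $U_2(0)=U_3(0)=\ell_\be(\tau,0)=L_\be(\tau,0)$, while the exponential law with parameter $\sqrt{2\la}$ stated in (\ref{2.2}) transfers verbatim. Conditional independence of the $U_k$ given their initial values is inherited from the conditional independence of the $V_k$, because each $U_k$ is a deterministic linear function of $V_k$. I do not anticipate a genuine obstacle: the whole argument is a bookkeeping exercise in scaling and change of variable, the only mild subtlety being the matching of initial values across $y=0$, which is transparent once the symmetric factors $2\be$ and $2(1-\be)$ from (\ref{local}) are divided out.
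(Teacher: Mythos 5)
Your proposal is correct and follows essentially the same route as the paper, which simply states that Theorem \ref{new11} is obtained straightforwardly from the relation (\ref{local}) together with Theorem \ref{th 2.1}; your generator computation under the rescalings $U_1=V_1/(2\be)$, $U_2=V_2/(2\be)$, $U_3=V_3/(2(1-\be))$ and the matching of initial values via (\ref{121})--(\ref{122}) spell out exactly the bookkeeping the authors leave implicit.
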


  \section{Functionals of the local time process}
\medskip

Consider, for a fixed $t>0,$  the local time process $(\ell_\be(t,y))_{y\in\R}$ of SBM. In this section we study the integral functionals of the form
\begin{equation} 
B(t):=\int_{-\infty}^{\infty} f(\ell_\be(t,y))\,dy,\label{3.1}
\end{equation} 
where $f(v)$, $v\in [0,\infty)$, is a non-negative piecewise  
 continuous function. Let $\lambda >0$ and $\tau$  an exponentially with mean $1/\lambda$ distributed random variable independent of $W_\be.$  The main result characterizes the distribution of 
 \begin{equation} 
B(\tau):=\int_{-\infty}^{\infty} f(\ell_\be(\tau,y))\,dy,\label{3.11}
\end{equation}
via its Laplace transform  which can be obtained by solving a system of ordinary differential equations. To find the Laplace transform of $B(t)$ one should then invert the transform of $B(\tau)$  with respect to $\lambda.$ In the theorem to follow we, in fact, analyze  $B(\tau)$ under a restriction that the local time process does not exceed a given positive value. This allows us to compute the distribution of the supremum of the local time process.


\begin{theorem}\label{th 3.1} Let $f(v)$, $v\in [0,h]$, be a nonnegative
piecewise continuous function, $f(0)=0$, and $\be^*=\max\{\be,(1-\be)\}$. Then
\begin{equation} 
\bE_0\Big [\exp\Big(-\int_{-\infty}^{\infty} f(\ell_\be(\tau,y))\,dy\Big);
\sup_{y\in\bR}\ell_\be(\tau,y)\le h\Big]\hskip20mm
\label{3.3}
\end{equation}
$$
=2\la\int_0^{h/2\be^*}\big\{\be R(2(1-\be)v) Q(2\be\,v)
+(1-\be) R(2\be\,v) Q(2(1-\be) v)\big\}\,dv,
$$ 
where for $v\in[0,h]$ the functions $R$, $Q$ are the unique bounded
continuous solutions of the problem
\begin{align} 
& 2vR''(v)-(\la v+f(v)) R(v)=0,\qquad R(0)=1,\label{3.4}\\
&2vQ''(v)+2Q'(v)-(\la v+f(v)) Q(v)=-R(v),\label{3.5}\\
&R(h)=0,\qquad Q(h)=0.\label{3.6}
\end{align}
 In the case $h=\infty$ the boundary conditions in 
$(\ref{3.6})$ are replaced by
\begin{equation} 
\limsup_{v\to\infty} {\rm e}^{v\sqrt{\la/2}} R(v) <\infty,\qquad
\limsup_{v\to\infty} {\rm e}^{v\sqrt{\la/2}} Q(v) <\infty.\label{3.7}
\end{equation}
\end{theorem}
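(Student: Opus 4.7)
The plan is to condition on $W_\be(\tau)=z$, invoke Theorem \ref{th 2.1} to split the local time into three conditionally independent diffusions, compute the Laplace functional of each via Feynman--Kac, and integrate out $z$ and $\ell_\be(\tau,0)$. For $W_\be(\tau)=z>0$,
$$\int_{-\infty}^{\infty}f(\ell_\be(\tau,y))\,dy=\int_0^\infty f(V_3(u))\,du+\int_0^z f(V_2(y))\,dy+\int_0^\infty f(V_1(u))\,du,$$
and the event $\{\sup_y\ell_\be(\tau,y)\le h\}$ factorises as the intersection of the sup-bounds on $V_1,V_2,V_3$.

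I would then introduce
$$\mc R(v):=\EE_v\Big[\exp\Big(-\int_0^\infty f(V_1(s))\,ds\Big);\,\sup_s V_1(s)\le h\Big]$$
(the same function serves for $V_3$ since $\BL_1=\BL_3$) and
$$G(v,z):=\EE_v\Big[\exp\Big(-\int_0^z f(V_2(s))\,ds\Big)\mc R(V_2(z));\,\sup_{s\le z}V_2(s)\le h\Big],$$
so that the strong Markov property applied at $y=z$ reduces everything to $\mc R$ and $G$. Feynman--Kac yields $\BL_1\mc R=f\mc R$ on $(0,h)$ with $\mc R(0)=1,\ \mc R(h)=0$, and $\pd_z G=\BL_2G-fG$ with $G(\cdot,0)=\mc R$ and $G(h,z)=0$. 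Taking $\widehat G(v):=\int_0^\infty {\rm e}^{-z\sqrt{2\la}}G(v,z)\,dz$, as motivated by the density $\be\sqrt{2\la}\,{\rm e}^{-z\sqrt{2\la}}$ of $W_\be(\tau)$ on $\{z>0\}$, and integrating by parts in $z$ converts the PDE for $G$ into the ODE $(\BL_2-\sqrt{2\la}-f)\widehat G=-\mc R$. The key substitution $\mc R={\rm e}^{v\sqrt{\la/2}}R,\ \widehat G={\rm e}^{v\sqrt{\la/2}}Q$ eliminates the first-order drift $-2v\sqrt{2\la}\,d/dv$ from both operators and delivers exactly (\ref{3.4})--(\ref{3.6}), with the stated boundary values at $v=h$.

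To assemble (\ref{3.3}) I would integrate the product $G(2\be\ell,z)\,\mc R(2(1-\be)\ell)$ against the conditional density $\sqrt{2\la}\,{\rm e}^{-\ell\sqrt{2\la}}$ of $\ell:=\ell_\be(\tau,0)$ from (\ref{2.2}) and against the density $\be\sqrt{2\la}\,{\rm e}^{-z\sqrt{2\la}}dz$ of $W_\be(\tau)$ on $\{z>0\}$; the $z$-integral has already been absorbed into $\widehat G(2\be\ell)={\rm e}^{2\be\ell\sqrt{\la/2}}Q(2\be\ell)$. The decisive cancellation is that the exponential factors ${\rm e}^{v\sqrt{\la/2}}$ produced by the substitution combine to give total weight ${\rm e}^{\ell\sqrt{2\la}}$, exactly annihilating the ${\rm e}^{-\ell\sqrt{2\la}}$ in (\ref{2.2}). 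After renaming the integration variable and restricting to $v\le h/(2\be^*)$ (since both $2\be v\le h$ and $2(1-\be)v\le h$ are needed for the starting values of $V_2,V_3$ to be admissible under the sup-constraint), the first summand of (\ref{3.3}) drops out; the symmetric contribution from $z<0$, handled via (\ref{as}), yields the second. For $h=\infty$ the boundary conditions at $h$ are replaced by (\ref{3.7}), which is precisely the decay rate required for the Laplace integrals in $v$ to converge.

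The principal technical obstacle is the singular point $v=0$: the coefficient $2v$ of the leading derivative vanishes there, so in place of a classical Dirichlet condition one has to establish uniqueness of a \emph{bounded} continuous solution of (\ref{3.4})--(\ref{3.5}), and this boundedness must be reconciled with the probabilistic boundary behavior of $V_2$ at $0$ (an entrance-type boundary due to the $+2\,d/dv$ drift). Rigorous Fubini justifications for the triple integral over $(z,\ell,\text{paths})$ and the $h\to\infty$ limiting argument for (\ref{3.7}) also require attention.
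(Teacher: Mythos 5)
Your proposal is correct and follows essentially the same route as the paper's proof: conditioning on $W_\be(\tau)=z$ and $\ell_\be(\tau,0)$, using the Ray--Knight decomposition of Theorem \ref{th 2.1} with $V_1$ entering through $\mc R(V_2(z))$, Feynman--Kac for the two half-lines, a Laplace transform in $z$ weighted by the density $\be\sqrt{2\la}\,{\rm e}^{-z\sqrt{2\la}}$, the substitution ${\rm e}^{\pm v\sqrt{\la/2}}$ producing \eqref{3.4}--\eqref{3.5} and cancelling the exponential density \eqref{2.2}, and symmetry \eqref{as} for $z<0$. The only (immaterial) differences are that you transform in $z$ before the exponential substitution rather than after, and you build the killing at level $h$ in from the start instead of treating $h=\infty$ first and citing Borodin's book for finite $h$.
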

\noindent
{\sl  Proof.} Note that we consider only the case $W_\be(0)=0$.
Assume first that $h=\infty$.
Using (\ref{2.2}) and the Markov property
of the process $\ell_\be(\tau,y)$, see (\ref{2.3}) for $q=0$, yield
$$
\bE_0\Big\{\exp\Big(-\int_{-\infty}^\infty f(\ell_\be(\tau,y))\,dy\Big)\Big|
W_\be(\tau)=z\Big\}\hskip40mm
$$
$$
=\sqrt{2\la}\int_0^{\infty} {\rm e}^{-v\sqrt{2\la}}\,
\bE_0^z\Big\{\exp\Big(-\int_{-\infty}^\infty f(\ell_\be(\tau,y))\,dy\Big)\Big|
\ell_\be(\tau,0)=v\Big\}\,dv
$$
\begin{equation}
=\sqrt{2\la}\int_0^{\infty} {\rm e}^{-v\sqrt{2\la}}\,
\wt r(z,v)\wt q(z,v)\,dv,\label{3.8}
\end{equation}
where
\begin{align}\wt r(z,v):&=\bE_0^z\Big\{\exp\Big(-\int_{-\infty}^0 f(\ell_\be(\tau,y))\,dy\Big)
\Big|\ell_\be(\tau,0)=v\Big\},\nonumber\\
\wt q(z,v):&=\bE_0^z\Big\{\exp\Big(-\int_0^\infty
f(\ell_\be(\tau,y))\,dy\Big)\Big|\ell_\be(\tau,0)=v\Big\},\nonumber\end{align}
and  $\bE^z_0$ stands for the expectation associated with the conditional measure 
$\bP_0^z(\cdot):=\bP_0\{\cdot|W_\be(\tau)=z\}.$ We assume now that  $z>0$. Applying (\ref{122}) and the definition of  $V_3$,
it is seen that 
$$
\wt r(z,v)=\bar r(z,2(1-\be)v),
$$ where
$$\bar r(z,v):=\bE\Big\{\exp\Big(-\int_0^\infty f(V_3(h))\,dh\Big)\Big|V_3(0)=v\Big\}. $$

\noindent
Since the process $V_3$ has
the generating operator 
$$
\BL_3=2v\Big(\bfrac{{d}^2}
{d v^2}-\sqrt{2\la}\bfrac{d}{d v}\Big)
$$
(cf. Theorem \ref{th 2.1}) it follows  that $\bar r(z,v)$ does not depend on $z$.
Let  

\begin{equation}\label{barR}
\bar R(v):=\bar r(z,v).
\end{equation} 
Then by the Feynman--Kac formula (cf.  Theorem~12.5 Ch.~{II} of \cite{borodin13}) we have that the function $\bar R$ is a bounded solution of the ODE
\begin{equation}
2v\big(\bar R''(v)-\sqrt{2\la} \,\bar R'(v)\big)-f(v)\bar R(v)=0,\quad v>0.\label{3.9}
\end{equation}
The operator $\BL_3$ corresponds to the $0$-dimensional squared radial Ornstein-Uhlenbeck process. 
The boundary point 0 is taken to be killing and, hence, when hitting zero the process 
never leaves zero. Since
$f(0)=0$ it follows that $\bar R(0)=1.$ We remark also that $V_1$ behaves similarly as $V_3.$ 

Applying (\ref{121}) and the definitions of $V_1$ and  $V_2$ yield 
$$\wt q(z,v)=\bar q(z,2\be\,v),$$ where
\begin{align*}
\bar q(z,v)&:=\bE_v^{(2)}\Big\{\exp\Big(-\int_0^\infty f(V_1(h))\,dh
-\int_0^z f(V_2(h))\,dh\Big)\Big\}\\
&:=\bE\Big\{\exp\Big(-\int_0^\infty f(V_1(h))\,dh
-\int_0^z f(V_2(h))\,dh\Big)\Big|V_2(0)=v\Big\} \\ 
&
=\int_0^\infty\bE_v^{(2)}\Big\{\!\exp\!\Big(\!-\int_0^\infty f(V_1(h))\,dh
-\int_0^z f(V_2(h))\,dh\!\Big)\Big|V_2(z)=g\Big\}
\\
&\hskip3cm \times 
\bP\big(V_2(z)\in dg\,|\, V_2(0)=v\big). 
\end{align*}
By the Markov property and
the condition $V_1(0)=V_2(z)$ we have
\begin{align}
\bar q(z,v) &=\int_0^\infty\bE\Big\{\exp\Big(-\int_0^\infty f(V_1(h))\,dh\Big)\Big|V_1(0)=g\Big\}\nonumber\\
&\times\bE_v^{(2)}\Big\{\exp\Big(-\int_0^zf(V_2(h))\,dh\Big)\Big|V_2(z)=g\Big\}
\bP_v^{(2)}\big(V_2(z)\in dg\big),\nonumber
\end{align}
where $\bP_v^{(2)}(\cdot):=\bP(\cdot\,|\, V_2(0)=v).$  Since $V_1$ and $V_3$ (with the same initial values) are identical in law we may write (cf. (\ref{barR}))
\begin{align}
\nonumber
\bar q(z,v) &=\int_0^\infty\bar R(g)
\bE_v^{(2)}\Big\{\exp\Big(-\int_0^zf(V_2(h))\,dh\Big)\Big|V_2(z)=g\Big\}\\
&
\nonumber \hskip2cm \times
\bP_v^{(2)}\big(V_2(z)\in dg\big)\\
&=\bE\Big\{\bar R(V_2(z))\exp\Big(-\int_0^zf(V_2(h))\,dh\Big)\Big|V_2(0)=v\Big\},\nonumber
\end{align}
and applying the Feynman--Kac formula for diffusion $V_2$ yields that $\bar q$ satisfies
\begin{align}\nonumber\hskip-1cm\bfrac{\partial}{\partial z}\bar q(z,v)&=
2v\Big(\bfrac{\partial^2}{\partial v^2}\bar q(z,v)-\sqrt{2\la}
\bfrac{\partial}{\partial v}\bar q(z,v)\Big)
\\
&\hskip2cm +
2\bfrac{\partial}{\partial v}\bar q(z,v)-f(v)\bar q(z,v),\label{3.10}\\
\bar q(0,v)&=\bar R(v).\label{3.11}
\end{align}
Now we re-express ( \ref{3.8})  as
$$
\bE_0\Big\{\exp\Big(-\int_{-\infty}^\infty f(\ell_\be(\tau,y))\,dy\Big)\Big|
W_\be(\tau)=z\Big\}\hskip40mm
$$
$$
=\sqrt{2\la}\int_0^{\infty} {\rm e}^{-v\sqrt{2\la}}\,
\bar R(2(1-\be)v)\bar q(z,2\be\,v)\,dv, 
$$
\begin{equation}
\hskip-1.1cm=\sqrt{2\la}\int_0^{\infty} R(2(1-\be)v) q(z,2\be\,v)\,dv,\label{3.12}
\end{equation}
where $R(v):={\rm e}^{-v\sqrt{\la/2}}\bar R(v)$ and $q(z,v):={\rm e}^{-v\sqrt{\la/2}}\bar q(z,v)$.
Since $\bar R$ satisfies in (\ref{3.9}) it is seen that $R$ satisfies (\ref{3.4}). Furthermore,   since $\bar q$ 
fulfills (\ref{3.10}) and (\ref{3.11}) we have 
\begin{align}\nonumber\hskip-1cm
\bfrac{\partial}{\partial z} q(z,v)&=
2v\bfrac{\partial^2}{\partial v^2} q(z,v)+2\bfrac{\partial}{\partial v} q(z,v)\\
&\hskip2cm -
(\la v-\sqrt{2\la}+f(v)) q(z,v),\label{3.13}\\
 q(0,v)&=R(v).\label{3.14}
\end{align}
Observing that  $\bar R$ is bounded the first inequality in (\ref{3.7}) clearly holds.
Define
$$ Q(v):=\int_0^{\infty} {\rm e}^{-z\sqrt{2\la}} q(z,v)\,dz. $$
Then  ${\rm e}^{v\sqrt{\la/2}}\,Q(v)$, $v\ge 0$, is bounded because $\bar q$ is bounded.  Therefore the second inequality  in (\ref{3.7}) holds. From (\ref{3.13}) and (\ref{3.14}) it follows that
$Q$ satisfies (\ref{3.5}).
Hence, taking into account (\ref{1.10}), we have
\begin{align*}
\bE&_0\Big\{\exp\Big(-\int_{-\infty}^{\infty} f(\ell_\be(\tau,y))\,dy\Big)
{\bold 1}_{(0,\infty)}(W_\be(\tau))\Big\}\\
&=\sqrt{2\la}\be\int_0^\infty dz {\rm e}^{-z\sqrt{2\la}}
\bE_0\Big\{\exp\Big(-\int_{-\infty}^\infty f(\ell_\be(\tau,y))\,dy\Big)\Big|
W_\be(\tau)=z\Big\}
\\
&=2\la\be\int_0^\infty dv\int_0^\infty dz {\rm e}^{-z\sqrt{2\la}} R(2(1-\be)v)
q(z,2\be v)
\\
&=2\la\be\int_0^\infty dv  R(2(1-\be)v)\,Q(2\be v).
\end{align*}
Using (\ref{as}) we
obtain (\ref{3.3}) for the case when $ h=\infty$ and $f$ is  bounded and
twice continuously differentiable with bounded first and
second derivatives. 
The extension  for the piecewise continuous
functions is proved using  approximations by continuously
differentiable functions (see  the proof of Theorem 4.1 Ch.~{IV} in \cite{borodin13}). 

The case $h<\infty$  can be analyzed similarly  as
in the proof of Theorem 5.1 Ch.~{V} in \cite{borodin13}. This completes the proof of Theorem \ref{th 3.1}.\hfill $\square $

\begin{remark}\label{re 3.2} In the case $h<\infty$ we indeed consider the solutions
of equations $(\ref{3.4})$--$(\ref{3.6})$ to be equal to zero for $v\in[h,\infty)$. 
Then in $(\ref{3.3})$ one can integrate from 0 up to $\infty$.
\end{remark}

Theorem \ref{th 3.1} can be applied to compute the distribution of the supremum of the 
local time process  $(\ell_\be(t,y))_{y\in\bR}$ up to a fixed time $t>0$. In the rest of this section we write, for short,
$$
\sup\ell_\be(t,y):= \sup\{\ell_\be(t,y)\,:\, y\in\bR\}.
$$
The distribution of the supremum is first found  Theorem  \ref{th 3.2} up to an exponential time $\tau.$  
The inversion is then given in Theorem \ref{th 3.3}
\begin{theorem}\label{th 3.2} For $h\ge 0$ and $\be\ge 1/2$
\begin{equation}
\bP_0\Big(\sup\ell_\be(\tau,y)>h\Big)
=\bfrac{h\sqrt{2\la}I_1(\be,h\sqrt{\la/2})}
{\sh^2(h\sqrt{\la/2})I_0(h\sqrt{\la/2})}, \label{3.15}
\end{equation}
where with $\widehat\beta=(1-\beta)/\beta$
$$I_1(\be,x):=\bfrac12\int_0^1\big\{\sh\big((1-\widehat\be v)x\big)
I_0(vx)+\widehat\be\sh((1-v)x) I_0\big(\widehat\be vx\big)\big\} dv,$$
and $I_0$ and $I_1$ denote the modified Bessel functions of order 0 and 1, respectively.
\end{theorem}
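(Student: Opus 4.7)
The plan is to specialise Theorem \ref{th 3.1} to $f\equiv 0$. Then $\exp(-\int f\,dy)=1$, so the left-hand side of (\ref{3.3}) collapses to $\bP_0(\sup_{y}\ell_\be(\tau,y)\le h)$; since $\be\ge 1/2$ we have $\be^*=\be$, whence
\begin{equation*}
\bP_0\Big(\sup_{y\in\bR}\ell_\be(\tau,y)\le h\Big)=2\la\int_0^{h/(2\be)}\bigl\{\be R(2(1-\be)v)Q(2\be v)+(1-\be)R(2\be v)Q(2(1-\be)v)\bigr\}\,dv,
\end{equation*}
with $R,Q$ the unique bounded continuous solutions of (\ref{3.4})--(\ref{3.6}) in which $f$ is set to $0$. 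The remaining task is to solve these two ODEs explicitly and then to reduce the resulting integral to the stated form.

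Next I would solve the boundary-value problems. With $f\equiv 0$, dividing the $R$-equation by $v$ gives $R''=(\la/2)R$, and the conditions $R(0)=1,\ R(h)=0$ yield
\begin{equation*}
R(v)=\frac{\sh((h-v)\sqrt{\la/2})}{\sh(h\sqrt{\la/2})}.
\end{equation*}
The homogeneous part $2vQ''+2Q'-\la vQ=0$ is, after the substitution $u=v\sqrt{\la/2}$, the modified Bessel equation of order zero with fundamental system $\{I_0(v\sqrt{\la/2}),K_0(v\sqrt{\la/2})\}$. Writing the inhomogeneous equation in Sturm-Liouville form $(2vQ')'-\la vQ=-R(v)$, one checks that $p\cdot W$ is the constant $-2I_0(h\sqrt{\la/2})$, so the Green function attached to the boundary conditions ``bounded at $0$, vanishing at $h$'' produces
\begin{equation*}
Q(v)=\frac{1}{2I_0(h\sqrt{\la/2})}\Bigl[\phi(v)\int_0^v I_0(u\sqrt{\la/2})R(u)\,du+I_0(v\sqrt{\la/2})\int_v^h \phi(u)R(u)\,du\Bigr],
\end{equation*}
where $\phi(v):=I_0(h\sqrt{\la/2})K_0(v\sqrt{\la/2})-K_0(h\sqrt{\la/2})I_0(v\sqrt{\la/2})$ is the homogeneous solution that vanishes at $h$.

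Finally I would substitute these closed forms back, rescale $v\mapsto w=2\be v/h$ so that the outer integration runs over $[0,1]$ and the arguments $2\be v$ and $2(1-\be)v$ become $hw$ and $h\widehat\be w$ respectively, which is precisely the pattern appearing in the definition of $I_1(\be,h\sqrt{\la/2})$. Passing to the complement $\bP_0(\sup\ell_\be(\tau,y)>h)=1-\bP_0(\sup\ell_\be(\tau,y)\le h)$ should then produce (\ref{3.15}). The main obstacle is this last simplification: since $Q$ is itself a double integral against $R$, the product $R\cdot Q$ yields nested integrals containing $I_0$, $K_0$ and sinh at several different arguments, and it is not at all transparent that the $K_0$-pieces should disappear from the final answer. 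The cancellation is expected to come from combining the $\be$- and $(1-\be)$-contributions, integrating by parts against the sinh kernel coming from $R$, and invoking the Wronskian identity $I_0(u)K_1(u)+I_1(u)K_0(u)=1/u$, which collapses the remaining expression into a single integral matching $I_1(\be,h\sqrt{\la/2})$ divided by $\sh^2(h\sqrt{\la/2})I_0(h\sqrt{\la/2})$.
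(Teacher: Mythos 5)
Your overall strategy is the same as the paper's: put $f\equiv 0$ into Theorem \ref{th 3.1}, solve (\ref{3.4})--(\ref{3.6}), and change variables; your $R$ is exactly the paper's, and your Green-function representation of $Q$ (with $pW=-2I_0(h\sqrt{\la/2})$) is a correct description of the unique bounded solution vanishing at $h$. But the proof is not complete: the step you yourself flag as the ``main obstacle'' --- collapsing the nested integrals with $I_0$, $K_0$ and $\sh$ into the closed form (\ref{3.15}) --- is precisely the substantive content of the argument, and you only conjecture that it works via the Wronskian identity and some integrations by parts. In particular you never address why, after taking the complement, the non-Bessel part of $R\cdot Q$ integrated against (\ref{3.3}) contributes exactly $1$; without that identity the complement step does not produce a formula of the stated shape at all.

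The paper avoids your $K_0$-laden route entirely by observing that, since $2vR''=\la vR$, the elementary function $\ch((h-v)\sqrt{\la/2})/(\sqrt{2\la}\,\sh(h\sqrt{\la/2}))$ is an explicit particular solution of (\ref{3.5}) with $f\equiv0$, so
\begin{equation*}
Q(v)=\bfrac{\ch((h-v)\sqrt{\la/2})}{\sqrt{2\la}\,\sh(h\sqrt{\la/2})}
-\bfrac{I_0(v\sqrt{\la/2})}{\sqrt{2\la}\,\sh(h\sqrt{\la/2})\,I_0(h\sqrt{\la/2})},
\end{equation*}
the second term being the unique multiple of the bounded homogeneous solution $I_0$ that enforces $Q(h)=0$ (no $K_0$ appears because $Q$ must be bounded at $0$). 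With this closed form, the $\ch$-part of the integrand in (\ref{3.3}) is handled by an elementary product-to-sum computation showing the two contributions ($L_1+L_2$ in the paper) sum exactly to $1$, and the $I_0$-part, after the substitution $w=2\be v/h$ you indicate, gives $h\,I_1(\be,h\sqrt{\la/2})$; taking the complement then yields (\ref{3.15}). You could repair your version either by verifying that your Green-function expression simplifies to the displayed $Q$ (both solve the same boundary-value problem, so this is a uniqueness argument plus a direct check that the display solves (\ref{3.5})--(\ref{3.6})), or by adopting the particular-solution ansatz from the start; as written, the decisive cancellation and the ``integrates to $1$'' identity are asserted, not proved.
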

\noindent
The corollaries below are special cases of (\ref{3.15}). The first one states the distribution of the supremum of local time of reflecting BM, which can also be found in \cite{borodinsalminen15} formula 3.1.11.2. The second one is for standard BM and coincides with formula 1.1.11.2 in ibid.

\begin{corollary}\label{co 3.1} For $\be=1$ we have $W_\be(t)=|W(t)|,$
\begin{equation}
I_1(1,x)=\bfrac12\,\sh( x)\int_0^1 I_0(xv)\,dv,\label{3.16}
\end{equation}
and
$$
\bP_0\Big(\sup\ell_1(\tau,y)>h\Big)=\bfrac{h\sqrt{\la/2}}
{\sh(h\sqrt{\la/2})I_0(h\sqrt{\la/2})} \int_0^1
I_0(vh\sqrt{\la/2})\,dv. $$

\end{corollary}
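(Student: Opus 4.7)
The plan is to derive this corollary by directly specializing Theorem \ref{th 3.2} at $\be=1$, since everything reduces to a two-step algebraic simplification once we verify the identification $W_1(t)=|W(t)|$.

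First I would justify why $\be=1$ gives reflecting Brownian motion. From (\ref{e00}) and (\ref{e01}) with $\be=1$, the speed measure is $4\,dx$ on $(0,\infty)$ and vanishes on $(-\infty,0)$ (and at $0$), while the scale function on $[0,\infty)$ is $x/2$. Thus SBM with $\be=1$ is a diffusion on $[0,\infty)$ in natural scale with linear speed, which is reflecting Brownian motion (up to the factor-of-two normalization discussed in Remark~\ref{alpha}); equivalently, the excursion construction of It\^o-McKean with $p=1$ multiplies every excursion by $+1$, giving $|W|$. This also agrees with the conditional law $\bP_0(W_1(\tau)\in dz)=\sqrt{2\la}\,{\rm e}^{-z\sqrt{2\la}}\,dz$ used in the proof of Theorem~\ref{th 3.1}.

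Next I would compute $I_1(1,x)$ by substitution. With $\be=1$ we have $\widehat\be=(1-\be)/\be=0$, so the defining integrand in Theorem \ref{th 3.2},
$$\sh\big((1-\widehat\be v)x\big)I_0(vx)+\widehat\be\,\sh((1-v)x)I_0(\widehat\be vx),$$
collapses: the first term becomes $\sh(x)I_0(vx)$, while the entire second term vanishes because of the explicit factor $\widehat\be=0$. This yields (\ref{3.16}) directly.

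Finally, I would plug (\ref{3.16}) into formula (\ref{3.15}). The numerator becomes
$$h\sqrt{2\la}\cdot\tfrac12\,\sh(h\sqrt{\la/2})\int_0^1 I_0(vh\sqrt{\la/2})\,dv,$$
and one factor of $\sh(h\sqrt{\la/2})$ cancels against the $\sh^2(h\sqrt{\la/2})$ in the denominator, leaving $\sh(h\sqrt{\la/2})I_0(h\sqrt{\la/2})$ downstairs. The remaining constant $\sqrt{2\la}/2=\sqrt{\la/2}$ then produces exactly the stated expression. There is essentially no obstacle here: the work is entirely in recognizing that $\widehat\be=0$ kills the second half of $I_1(\be,x)$ and in performing the elementary identity $\sqrt{2\la}/2=\sqrt{\la/2}$; the only point requiring a sentence of comment is the identification $W_1=|W|$, which cross-checks against the classical formula 3.1.11.2 of \cite{borodinsalminen15}.
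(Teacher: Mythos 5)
Your proposal is correct and coincides with the paper's own (implicit) argument: the paper offers no separate proof of Corollary~\ref{co 3.1}, merely noting that it is a special case of (\ref{3.15}), and your substitution $\widehat\be=0$ into the definition of $I_1(\be,x)$ followed by the cancellation $\sqrt{2\la}/2=\sqrt{\la/2}$ and one factor of $\sh(h\sqrt{\la/2})$ is exactly that specialization. The additional remark identifying $W_1=|W|$ via the speed measure/excursion construction is a sensible sanity check consistent with the paper's cross-reference to formula 3.1.11.2 of \cite{borodinsalminen15}.
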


\begin{corollary}\label{co 3.2} For $\be=1/2$ we have $W_\be(t)=W(t),$
\begin{equation}
I(1/2,x):=\int_0^1 \sh((1-v)x) I_0(vx)\,dv=I_1(x), \label{3.17}
\end{equation}
and
\begin{equation}
\bP_0\Big(\sup\ell_{1/2}(\tau,y)>h\Big)=\bfrac{h\sqrt{2\la}\,I_1(h\sqrt{\la/2})}
{\sh^2(h\sqrt{\la/2})I_0(h\sqrt{\la/2})}. \label{3.18}
\end{equation}

\end{corollary}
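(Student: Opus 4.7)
The plan is to obtain Corollary \ref{co 3.2} as a direct specialization of Theorem \ref{th 3.2} to $\beta=1/2$, plus a single integral identity. When $\beta = 1/2$ one has $\widehat\beta = (1-\beta)/\beta = 1$, and the two summands defining $I_1(\beta,x)$ in Theorem \ref{th 3.2} coincide: both reduce to $\sh((1-v)x)\,I_0(vx)$. Thus
$$
I_1(1/2,x) = \tfrac12\int_0^1\bigl\{\sh((1-v)x)\,I_0(vx)+\sh((1-v)x)\,I_0(vx)\bigr\}\,dv = \int_0^1 \sh((1-v)x)\,I_0(vx)\,dv,
$$
which is the quantity $I(1/2,x)$ defined in (\ref{3.17}). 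Everything in the corollary then reduces to checking the Bessel identity $I(1/2,x) = I_1(x)$, after which substitution into (\ref{3.15}) yields (\ref{3.18}) immediately, using the well-known fact (recorded in Section~\ref{intro}) that $W_{1/2}=W$ is standard Brownian motion.

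The key computational step is the verification of $I(1/2,x) = I_1(x)$. I would do this with Laplace transforms: changing variables $u = vx$ rewrites the integral as $x^{-1}\int_0^x \sh(x-u)\,I_0(u)\,du$, which is $x^{-1}$ times the convolution $\sh * I_0$ evaluated at $x$. From the standard transforms $\mathcal{L}\{\sh\}(s)=1/(s^2-1)$ and $\mathcal{L}\{I_0\}(s)=1/\sqrt{s^2-1}$ one gets
$$
\mathcal{L}\{\sh * I_0\}(s) = \frac{1}{(s^2-1)^{3/2}}.
$$
On the other hand, using $I_0' = I_1$ and $I_0(0)=1$, the derivative rule gives $\mathcal{L}\{I_1\}(s) = s/\sqrt{s^2-1} - 1$, and applying $\mathcal{L}\{x f(x)\}(s) = -\tfrac{d}{ds}\mathcal{L}\{f\}(s)$ yields $\mathcal{L}\{x I_1(x)\}(s) = (s^2-1)^{-3/2}$ as well. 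Uniqueness of the Laplace transform then gives $\int_0^x \sh(x-u)\,I_0(u)\,du = x\,I_1(x)$, so $I(1/2,x) = I_1(x)$, proving (\ref{3.17}).

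Substituting (\ref{3.17}) into (\ref{3.15}) with $\beta = 1/2$ gives
$$
\bP_0\Bigl(\sup\ell_{1/2}(\tau,y) > h\Bigr) = \frac{h\sqrt{2\lambda}\,I_1(h\sqrt{\lambda/2})}{\sh^2(h\sqrt{\lambda/2})\,I_0(h\sqrt{\lambda/2})},
$$
which is (\ref{3.18}). There is really no substantive obstacle here: the corollary is a clean specialization plus one integral identity. The only mildly technical step is the convolution identity, and if the Laplace-transform approach is judged indirect one can alternatively obtain it by expanding $\sh$ and $I_0$ as power series, integrating termwise against $v^{2k}(1-v)^{2m+1}$ via the Beta function, and reindexing the resulting double sum to recognize the power series of $I_1(x)$; the Laplace route is, however, noticeably shorter.
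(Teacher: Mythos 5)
Your proposal is correct and follows essentially the same route as the paper: Corollary \ref{co 3.2} is obtained there as a direct specialization of (\ref{3.15}) to $\be=1/2$ (where $\widehat\be=1$ makes the two summands of $I_1(\be,x)$ coincide), with the identity (\ref{3.17}) simply recorded as a known Bessel-function fact. Your Laplace-transform verification of $\int_0^x\sh(x-u)I_0(u)\,du=xI_1(x)$ is a correct supplement to a step the paper leaves unproved.
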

\noindent

\noindent
{\bf Proof of Theorem \ref{th 3.2}.}  We use Theorem 3.1 with $f=0$. The solutions
of (\ref{3.4})--(\ref{3.6}) in this case are
$$ R(v)=\bfrac{\sh((h-v)\sqrt{\la/2})}{\sh(h\sqrt{\la/2})}, \qquad 0\le v\le h, $$
and
$$ Q(v)=\bfrac{\ch((h-v)\sqrt{\la/2})}{\sqrt{2\la}\sh(h\sqrt{\la/2})}
-\bfrac{I_0(v\sqrt{\la/2})}{\sqrt{2\la}\sh(h\sqrt{\la/2}) I_0(h\sqrt{\la/2})},
\qquad 0\le v\le h. $$
One can verify that
$$L_1:=\bfrac{\be\sqrt{2\la}}{\sh^2(h\sqrt{\la/2})}
 \int_0^{h/2\be}\sh((h-2(1-\be)v)\sqrt{\la/2})\,\ch((h-2\be v)\sqrt{\la/2})\,dv $$
$$=\bfrac{\be\sqrt{2\la}}{2\sh^2(h\sqrt{\la/2})}
 \int_0^{h/2\be}(\sh((h-v)\sqrt{2\la})+\sh((2\be-1)v\sqrt{2\la}))\,dv, $$
and
$$L_2:=\bfrac{(1-\be)\sqrt{2\la}}{\sh^2(h\sqrt{\la/2})}
 \int_0^{h/2\be}\sh((h-2\be v)\sqrt{\la/2})\,\ch((h-2(1-\be)v)\sqrt{\la/2})\,dv $$
$$=\bfrac{(1-\be)\sqrt{2\la}}{2\sh^2(h\sqrt{\la/2})}
 \int_0^{h/2\be}(\sh((h-v)\sqrt{2\la})-\sh((2\be-1)v\sqrt{2\la}))\,dv. $$
Then
\begin{align*}
L_1+L_2&=\bfrac{\sqrt{\la/2}}{\sh^2(h\sqrt{\la/2})}\int_0^{h/2\be}
\sh((h-v)\sqrt{2\la})\,dv\\
&\hskip2cm +\bfrac{\sqrt{\la/2}(2\be-1)}{\sh^2(h\sqrt{\la/2})}\int_0^{h/2\be}
\sh((2\be-1)v\sqrt{2\la})\,dv\\
&=\bfrac1{2\sh^2(h\sqrt{\la/2})}
\big((\ch(h\sqrt{\la/2})-\ch((1-\tfrac1{2\be})h\sqrt{\la/2}))\\
&\hskip2cm
+(\ch((1-\tfrac1{2\be})h\sqrt{\la/2})-1)\big)\\
&=1.
\end{align*}
Now applying (\ref{3.3}) with $\be\ge 1/2$ and $f\equiv 0$ we have
$$
\bP\Big(\sup\ell_\be(\tau,y)\le h\Big)=L_1+L_2\hskip60mm 
$$
$$-\bfrac{\sqrt{2\la}}{\sh^2(h\sqrt{\la/2})I_0(h\sqrt{\la/2})}
\int_0^{h/2\be}\big\{\be\sh((h-2(1-\be)v)\sqrt{\la/2})
I_0(2\be v\sqrt{\la/2})$$
$$+(1-\be)\sh((h-2\be v)\sqrt{\la/2})
I_0(2(1-\be)v\sqrt{\la/2})\big\} dv. $$
Changing variables in the integral yields (\ref{3.15}) and the theorem is proved.\hfill $\square $

\medskip

Next we focus on the inversion of the Laplace transform in (\ref{3.15}). Recall the following standard notations (see Appendix 2 in \cite{erdelyi54}): $J_0(x)$ and $J_1(x)$,
$x\in\bR,$ are Bessel functions and $0<j_1<j_2<\dots$ are the positive
zeros of $J_0(x)$. Also let with $\widehat\be=(1-\be)/\be$ 
$$I_0(\be,x):=\bfrac12\int_0^1\big\{\ch((1-\widehat\be v)x)
I_0(vx)+\widehat\be \ch((1-v)x) I_0(\widehat\be vx)\big\} dv, $$
$$
I_1(\be,x):=\bfrac12\int_0^1\big\{\sh((1-\widehat\be v)x)
I_0(vx)+\widehat\be \sh((1-v)x) I_0(\widehat\be vx)\big\} dv, 
$$
$$J_0(\be,x):=\bfrac12\int_0^1\big\{\cos((1-\widehat\be v)x)
J_0(vx)+\widehat\be \cos((1-v)x) J_0(\widehat\be vx)\big\} dv, $$
$$J_1(\be,x):=\bfrac12\int_0^1\big\{\sin((1-\widehat\be v)x)
J_0(vx)+\widehat\be \sin((1-v)x) J_0(\widehat\be vx)\big\} dv. $$
\noindent
Using the properties of Bessel functions we have
\begin{align}\label{3.19} 
I_1(\be,ix)&=i J_1(\be,x),\qquad\qquad\hskip.05cm I_0(\be,ix)=J_0(\be,x), \\
\nonumber J_1(1/2,x)&=J_1(x),\hskip1.86cm J_0(1/2,x)=J_0(x),\\
\nonumber J_1(1,x)&=\bfrac{\sin x}{2x} \int_0^x J_0(v)\,dv,\quad
J_0(1,x)=\bfrac{\cos x}{2x} \int_0^x J_0(v)\,dv. 
\end{align}
In the proof of the next result the derivative with respect to $x$ of $I_1(\be,x)$ is needed.  Integrating by parts this is obtained in the following form
\begin{equation}
\bfrac{d}{d x}I_1(\be,x)=\bfrac1{2x}\sh((2-\tfrac1\be)x)
-\bfrac1x I_1(\be,x)+I_0(\be,x).\label{3.20}
\end{equation}

\begin{theorem}\label{th 3.3} For $h\ge 0$ and $\be\ge 1/2$
$$
\bP\Big(\sup\ell_\be(t,y)\le h\Big)=4\sum_{k=1}^{\infty}
\bfrac{J_1(\be,j_k)}{\sin^2 j_k\,J_1(j_k)} {\rm e}^{-2 j_k^2 t/h^2}
+4\sum_{k=1}^{\infty}\Big[\bfrac{4 t\pi k J_1(\be,\pi k)}
{h^2 J_0(\pi k)}\hskip17mm$$
\begin{equation}
-\bfrac{\sin((2-\tfrac1\be)\pi k)}{\pi k}
-\bfrac{J_0(\be,\pi k)}{J_0(\pi k)}+\bfrac{J_1(\be,\pi k)}{\pi k J_0(\pi k)}
-\bfrac{J_1(\be,\pi k)J_1(\pi k)}{J_0^2(\pi k)}\Big]
{\rm e}^{-2\pi^2 k^2 t/h^2}.\label{3.21}
\end{equation}
\end{theorem}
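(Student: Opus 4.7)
The plan is to invert the Laplace transform in (\ref{3.15}) by the Bromwich formula. Writing
$$G(\la):=\frac{h\sqrt{2\la}\,I_1(\be,h\sqrt{\la/2})}{\sh^2(h\sqrt{\la/2})\,I_0(h\sqrt{\la/2})},$$
Theorem~\ref{th 3.2} reads $\bP_0(\sup\ell_\be(\tau,y)>h)=G(\la)$, so the Laplace transform in $t$ of $\bP_0(\sup\ell_\be(t,y)\le h)$ is $(1-G(\la))/\la$. Since $G$ is even in $\sqrt\la$, and hence meromorphic in $\la$, I would recover the probability by closing the Bromwich contour to the left and collecting residues of $e^{\la t}(1-G(\la))/\la$ at its negative real poles. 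A small-$\la$ expansion using $I_1(\be,x)\sim x/2$ and $\sh^2(x)\sim x^2$ gives $G(0)=1$; the residues of $1/\la$ and $G(\la)/\la$ at the origin thus cancel, leaving only the negative poles.

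These poles form two families: simple poles at $\la_k=-2j_k^2/h^2$ from $I_0(ij_k)=J_0(j_k)=0$, and double poles at $\mu_k=-2\pi^2k^2/h^2$ from the double zero of $\sh^2(h\sqrt{\la/2})$ at $x=i\pi k$ (note $I_0(i\pi k)=J_0(\pi k)\ne 0$). At the simple poles I would use $I_0'=I_1$, the chain rule $\frac{d}{d\la}I_0(h\sqrt{\la/2})=I_1(h\sqrt{\la/2})\,h/(2\sqrt{2\la})$, the identities (\ref{3.19}), and $\sh(ij_k)=i\sin j_k$. A short computation then gives residue $-4\,e^{-2j_k^2 t/h^2}\,J_1(\be,j_k)/(\sin^2 j_k\,J_1(j_k))$, and the sign flip $\bP(\cdot\le h)=-\sum\operatorname{Res}$ reproduces the first sum of (\ref{3.21}).

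The harder step is the residue at the double poles $\mu_k$. Setting $x=h\sqrt{\la/2}$ and $\ep=x-i\pi k$, the identity $\sh(i\pi k+\ep)=(-1)^k\sh\ep$ gives $\sh^2(x)=\ep^2+O(\ep^4)$, and $\ep=h^2(\la-\mu_k)/(4i\pi k)+O((\la-\mu_k)^2)$ exhibits the resulting double zero in $\la$. I would Taylor-expand $I_0(x)$, $I_1(\be,x)$, and $\sqrt{2\la}$ to first order around $x=i\pi k$, using the key identity (\ref{3.20}) to compute $(d/dx)I_1(\be,x)|_{i\pi k}$ (which, after passage to the $J$-Bessel functions via (\ref{3.19}), equals $\sin((2-1/\be)\pi k)/(2\pi k)-J_1(\be,\pi k)/(\pi k)+J_0(\be,\pi k)$), and then assemble the Laurent expansion of $e^{\la t}G(\la)/\la$ around $\mu_k$. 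The $1/(\la-\mu_k)^2$ coefficient, paired with $\frac{d}{d\la}e^{\la t}|_{\mu_k}=t\,e^{\mu_k t}$, yields the linear-in-$t$ term $16 t\pi k J_1(\be,\pi k)/(h^2 J_0(\pi k))$ in (\ref{3.21}). The $1/(\la-\mu_k)$ coefficient supplies four constants: the three derivative terms $-\sin((2-1/\be)\pi k)/(\pi k)$, $-J_0(\be,\pi k)/J_0(\pi k)$, $+J_1(\be,\pi k)/(\pi k J_0(\pi k))$ corresponding to the three summands on the right-hand side of (\ref{3.20}), together with $-J_1(\be,\pi k)J_1(\pi k)/J_0(\pi k)^2$ from the first-order expansion of $1/I_0(x)$ at $i\pi k$ using $I_0'(i\pi k)=iJ_1(\pi k)$.

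Legitimacy of closing the contour on the left is justified by the standard large-argument asymptotic $I_\nu(x)\sim e^x/\sqrt{2\pi x}$, which gives uniform decay of $G(\la)/\la$ on enlarging arcs; I would only sketch this step. As a consistency check, specialising to $\be=1/2$ via (\ref{3.17}) reduces (\ref{3.21}) to the classical inversion formula 1.1.11.2 of \cite{borodinsalminen15}. The principal difficulty is the disciplined bookkeeping at the double pole: four independent sources of lower-order corrections --- from $\sh^{-2}$, $I_0^{-1}$, $\sqrt{2\la}$, and $I_1(\be,\cdot)$ around $i\pi k$ --- must be correctly assembled, with (\ref{3.20}) invoked at the right moment to reduce the derivative of $I_1(\be,\cdot)$ to its final form.
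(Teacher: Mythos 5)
Your proposal is correct and follows essentially the same route as the paper: Bromwich inversion of (\ref{3.15}) via the residue theorem, with simple poles at $\la=-2j_k^2/h^2$ coming from the zeros of $I_0(h\sqrt{\la/2})$, double poles at $\la=-2\pi^2k^2/h^2$ coming from $\sh^2(h\sqrt{\la/2})$, cancellation at the origin, and the identities (\ref{3.19})--(\ref{3.20}) handling the Bessel bookkeeping at the poles. The only (immaterial) difference is that the paper performs the inversion at $h=1$ and then invokes the scaling property (\ref{scaling}) to restore general $h$, whereas you keep $h$ general throughout.
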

\noindent
For reflecting BM and standard BM we have the following formulas which coincide with   3.1.11.4  and  1.1.11.4, respectively, in 
\cite{borodinsalminen15}. See also (5.22) in Ch. 5 in
$\cite{borodin13}$.
\begin{corollary}\label{co 3.3} For $\be=1$ we have $W_\be(t)=|W(t)|$,
$$J_1(1,\pi k)=0,\quad J_0(1,\pi k)
=\bfrac{(-1)^k}{2\pi k}\int_0^{\pi k} J_0(v)\,dv, $$
and
$$
\bP\Big(\sup\ell_1(t,y)\le h\Big)\hskip75mm $$
$$= \sum_{k=1}^\infty\Big\{\bfrac{2\,{\rm e}^{-2j_k^2t/h^2}}
 {j_k J_1(j_k)\sin j_k}\int_0^{j_k} J_0(v)dv
 -\bfrac{2\, {\rm e}^{-2\pi^2k^2t/h^2}}{(-1)^k\pi k J_0(\pi k)}\int_0^{\pi k} J_0(v)dv
 \Big\}.$$
\end{corollary}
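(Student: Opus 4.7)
The plan is to specialize Theorem \ref{th 3.3} to $\beta = 1$, where all the Bessel-function identities needed are already recorded in (\ref{3.19}). Three pieces must be assembled: first, the identification $W_1(t) = |W(t)|$ in law so that $\ell_1(t,y)$ is indeed the local time of reflecting Brownian motion; second, the explicit values $J_1(1,\pi k) = 0$ and $J_0(1,\pi k) = (-1)^k (2\pi k)^{-1} \int_0^{\pi k} J_0(v)\,dv$; and third, the collapse of the generic series in (\ref{3.21}) into the short form stated in the corollary.

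For the first piece, the boundary case $\beta = 1$ of the It\^o--McKean excursion construction (with $p = 1$) multiplies every excursion by $+1$, so $W_1$ coincides with the reflecting BM $|W|$; equivalently, the Harrison--Shepp equation degenerates to $W_1(t) = W(t) + \ell_1(t,0)$, whose solution has the law of $|W(t)|$ via Tanaka's formula. For the second piece, the closed forms $J_1(1,x) = \sin(x)(2x)^{-1}\int_0^x J_0(v)\,dv$ and $J_0(1,x) = \cos(x)(2x)^{-1}\int_0^x J_0(v)\,dv$ are already listed in (\ref{3.19}) (they come out of the general definitions by setting $\widehat\beta = 0$ and performing the substitution $u = vx$); evaluating at $x = \pi k$ and using $\sin(\pi k) = 0$, $\cos(\pi k) = (-1)^k$ immediately yields the two asserted identities.

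For the third piece, I substitute $\beta = 1$ into (\ref{3.21}) and simplify term by term. In the second sum, three of the five entries in the bracket carry a factor $J_1(1,\pi k)$ and therefore vanish, while the term $-\sin((2 - 1/\beta)\pi k)/(\pi k)$ becomes $-\sin(\pi k)/(\pi k) = 0$; the only survivor is $-J_0(1,\pi k)/J_0(\pi k)$, which contributes $-(-1)^k (2\pi k\, J_0(\pi k))^{-1} \int_0^{\pi k} J_0(v)\,dv$. Multiplying by the prefactor $4 e^{-2\pi^2 k^2 t/h^2}$ and rewriting $(-1)^k = 1/(-1)^k$ produces the second series in the corollary. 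In the first sum, the factor $J_1(1,j_k) = \sin(j_k)(2 j_k)^{-1} \int_0^{j_k} J_0(v)\,dv$ combines with the prefactor $4/(\sin^2 j_k\, J_1(j_k))$ to produce exactly $2(j_k \sin j_k\, J_1(j_k))^{-1} \int_0^{j_k} J_0(v)\,dv$, matching the first series. No genuine obstacle arises here; the only moment that warrants care is verifying that four of the five generic bracket terms vanish simultaneously at $\beta = 1$, which is the reason the $\beta = 1$ formula is so much shorter than (\ref{3.21}).
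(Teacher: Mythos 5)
Your proposal is correct and follows exactly the route the paper intends: the corollary is a direct specialization of Theorem \ref{th 3.3} at $\be=1$, using the closed forms for $J_1(1,x)$ and $J_0(1,x)$ recorded in (\ref{3.19}) and the vanishing of $\sin(\pi k)$ to kill four of the five bracket terms. Your term-by-term bookkeeping (including the factor $4\cdot\tfrac{1}{2}=2$ and the rewriting $(-1)^k=1/(-1)^k$) reproduces the stated series precisely.
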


\begin{corollary}\label{co 3.4} For $\be=1/2$ we have $W_\be(t)=W(t)$ and
$$
\bP\Big(\sup\ell_{1/2}(t,y)\le h\Big)= 4\sum_{k=1}^\infty
\bfrac1{\sin^2j_k} {\rm e}^{-2j_{0,k}^2t/h^2}\hskip20mm$$
$$+4\sum_{k=1}^\infty\Big(\bfrac{4t\pi kJ_1(\pi k)}{h^2J_0(\pi k)}+
 \bfrac{J_1(\pi k)}{\pi kJ_0(\pi k)}-
 \bfrac{J_1^2(\pi k)}{J_0^2(\pi k)}-1\Big) {\rm e}^{-2\pi^2k^2t/h^2}.$$

\end{corollary}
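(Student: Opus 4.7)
The plan is to deduce Corollary \ref{co 3.4} by specializing formula (\ref{3.21}) of Theorem \ref{th 3.3} at $\be=1/2$ and simplifying. Since $W_{1/2}=W$ has already been noted at the start of the paper, no probabilistic content is needed here; everything reduces to the Bessel-function identities recorded in (\ref{3.19}).

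First I would record the two simplifications that become available when $\be=1/2$. The parameter $\widehat\be=(1-\be)/\be$ collapses to $1$, and the exponent $2-\tfrac{1}{\be}$ collapses to $0$. The first observation, combined with the identities $J_1(1/2,x)=J_1(x)$ and $J_0(1/2,x)=J_0(x)$ from (\ref{3.19}), means that every occurrence of $J_0(\be,\cdot)$ and $J_1(\be,\cdot)$ in (\ref{3.21}) can be replaced by the corresponding ordinary Bessel function. The second observation makes the term $\sin((2-\tfrac{1}{\be})\pi k)/(\pi k)$ in (\ref{3.21}) vanish identically for every $k$.

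Given these substitutions I would simplify term by term. The coefficient of $\mathrm{e}^{-2 j_k^2 t/h^2}$ in the first sum reduces from $J_1(\be,j_k)/(\sin^2 j_k\, J_1(j_k))$ to $(\sin j_k)^{-2}$, matching the corollary. In the coefficient of $\mathrm{e}^{-2\pi^2 k^2 t/h^2}$ the five pieces of the bracket in (\ref{3.21}) become, respectively, $4t\pi k J_1(\pi k)/(h^2 J_0(\pi k))$, $0$, $-1$, $J_1(\pi k)/(\pi k J_0(\pi k))$, and $-J_1^2(\pi k)/J_0^2(\pi k)$; these are precisely the four terms appearing inside the parentheses of Corollary \ref{co 3.4} (after grouping the $-1$ with the others). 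There is no genuine obstacle beyond careful bookkeeping: the corollary is a direct arithmetic consequence of Theorem \ref{th 3.3} together with the identities (\ref{3.19}).
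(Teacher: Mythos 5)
Your proposal is correct and is exactly the derivation the paper intends: the corollary is obtained by setting $\be=1/2$ in (\ref{3.21}), using $\widehat\be=1$ and $2-\tfrac1\be=0$ together with the identities $J_1(1/2,x)=J_1(x)$, $J_0(1/2,x)=J_0(x)$ from (\ref{3.19}), after which every term matches (and the $j_{0,k}$ in the corollary's first sum is just a typo for $j_k$). Nothing further is needed.
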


\noindent
{\bf Proof of Theorem \ref{th 3.3}.} Considering (\ref{3.15}) with $h=1$ and evoking  the formula for the inverse Laplace transform we have
$$
\bP_0\Big(\sup\ell_\be(t,y)>1\Big)
=\bfrac{1}{2\pi i}\lim_{\rho\rightarrow \infty}\int_{\gamma-i\rho}^{\gamma+i\rho}
{\rm e}^{\la t}\bfrac{\sqrt{2}I_1(\be,\sqrt{\la/2})}{\sqrt{\la}\,
\sh^2(\sqrt{\la/2}) I_0(\sqrt{\la/2})}\,d\la,$$
where $\gamma$ is some small positive constant. The integral can be computed using the residue theorem. For this, note first that  
$$\lim_{\la\rightarrow 0}\bfrac{\sqrt{2}}{\sqrt{\la}}I_1(\be,\sqrt{\la/2})
=\bfrac12\int_0^1\{(1-(\tfrac1\be-1)v)+(\tfrac1\be-1)(1-v)\} dv=\bfrac12.$$
Since $0<j_1<j_2<\dots$ are the positive zeros of the function
$J_0(x)$, the values $-2j_k^2$, $k=1,2,\dots$, are the zeros of the function $I_0(\sqrt{\la/2})$. The
residues $r_{1,k}(t)$ of the function
$$g(\la):={\rm e}^{\la t}\bfrac{\sqrt{2}I_1(\be,\sqrt{\la/2})}
{\sqrt{\la}\sh^2(\sqrt{\la/2}) I_0(\sqrt{\la/2})}$$
at the points $-2j_k^2$ are
$$r_{1,k}(t)={\rm e}^{-2j_k^2t}\bfrac{4 I_1(\be,ij_k)}{\sh^2(ij_k)I_0^{\prime}(ij_k)}
=-\bfrac{4 J_1(\be,j_k)}{\sin^2j_k J_1(j_k)}{\rm e}^{-2j_k^2t}.$$
Since $\la=0$ is the simple root of the function
$\sh^2(\sqrt{\la/2})$, the residue of $g(\la)$ at this point is
$$r_0(t)=\lim_{\la\rightarrow 0}\bfrac{\la {\rm e}^{\la t}}{2\sh^2(\sqrt{\la/2})}=1.$$
The points $\la_k=-2\pi^2k^2$, $k=1,2,\cdots$, are the double roots
of the denominator of the function $g(\la)$, so the residues at these
points are
\begin{align} r_{2,k}(t)&=\lim_{\la\rightarrow \la_k}\bfrac{d}{d\la}
\{(\la-\la_k)^2 g(\la)\}\nonumber\\
&=\bfrac1{((\sh(\sqrt{\la_k/2}))')^2}\bigg(
\bfrac{{\rm e}^{\la_k t}I_1(\be,\sqrt{\la_k/2})}{\sqrt{\la_k/2}I_0(\sqrt{\la_k/2})}\bigg)^\prime\\
&\hskip2cm -\bfrac{(\sh(\sqrt{\la_k/2}))''}{((\sh(\sqrt{\la_k/2}))')^3}\bfrac{{\rm e}^{\la_k t}I_1(\be,\sqrt{\la_k/2})}{\sqrt{\la_k/2}I_0(\sqrt{\la_k/2})}\nonumber\\
&=\Big[-16\bfrac{t\pi kJ_1(\be,\pi k)}{J_0(\pi k)}+4\bfrac{\sin((2-\tfrac1\be)\pi k)}{\pi k}
+4\bfrac{J_0(\be,\pi k)}{J_0(\pi k)}\nonumber\\
&\hskip2cm -4\bfrac{J_1(\be,\pi k)}{\pi kJ_0(\pi k)}
+4\bfrac{J_1(\be,\pi k)J_1(\pi k)}{J_0^2(\pi k)}\Big] {\rm e}^{-2\pi^2k^2t}.\nonumber\end{align}
Here we used the formulas (\ref{3.19}), (\ref{3.20}) and
$$(\sh(\sqrt{\la/2}))'\Big|_{\la=-2\pi^2k^2}=\bfrac{(-1)^k}{4i\pi k},\qquad
(\sh(\sqrt{\la/2}))''\Big|_{\la=-2\pi^2k^2}=\bfrac{(-1)^k}{16i\pi^3 k^3}.$$
Consequently, the inversion  based on the residue theorem results to 
$$
\bP\Big(\sup\ell_\be(t,y)>1\Big)=r_0(t)+\sum_{k=1}^\infty(r_{1,k}(t)
+r_{2,k}(t)).$$
Considering the probability of the complementary event and substituting the
expressions for the residues give
$$\bP\Big(\sup_{y\in\bR}\ell_\be(t,y)\le 1\Big)=4\sum_{k=1}^{\infty}
\bfrac{J_1(\be,j_k)}{\sin^2 j_k\,J_1(j_k)} {\rm e}^{-2 j_k^2 t}
+4\sum_{k=1}^{\infty}\Big[\bfrac{4 t\pi k J_1(\be,\pi k)}
{h^2 J_0(\pi k)}\hskip17mm$$
\begin{equation}
-\bfrac{\sin((2-\tfrac1\be)\pi k)}{\pi k}
-\bfrac{J_0(\be,\pi k)}{J_0(\pi k)}
+\bfrac{J_1(\be,\pi k)}{\pi k J_0(\pi k)}-
\bfrac{J_1(\be,\pi k)J_1(\pi k)}{J_0^2(\pi k)}\Big]
{\rm e}^{-2\pi^2 k^2 t}.\label{3.22}
\end{equation}
By the scaling property of SBM (see Appendix A.1 (\ref{1.11}))
\begin{equation}\label{scaling}
\bP\Big(\sup\ell_\be(t,y)\le h\Big)=
\bP\Big(\sup\ell_\be(t/h^2,y)<1\Big).
\end{equation}
Now substituting in (\ref{3.22}) the value $t/h^2$ in place of $t$, we get formula 
(\ref{3.21}) for the distribution of $\sup\ell_\be(t,y)$.
The theorem is proved.\hfill $\square $
\appendix
\section{Appendix}
\subsection{ Feynman-Kac formula}

\label{s:skew.br}

For bounded measurable $\Phi $ and non-negative measurable $g$ define
\begin{align}
\label{1.2} 
\nonumber
U(x):&=\la\int_0^\infty {\rm e}^{-\la t}\bE_x\Big\{\varPhi(W_\be(t))\exp\Big(-\int_0^t g(W_\be(s))\,ds\Big)\Big\}\,dt\\
&=\bE_x\Big\{\varPhi(W_\be(\tau))
\exp\Big(-\int_0^\tau g(W_\be(s))\,ds\Big)\Big\},
\end{align}
where $\tau$ is an exponentially with parameter $\lambda>0$ distributed random time independent of $W_\beta.$ 

\begin{theorem} \label{th 1.1} Let $\varPhi$ and $g$ be as above and also piecewise continuous. Then the function $U$ defined in $(\ref{1.2})$
is the unique bounded continuous solution of the differential problem
\begin{align} &\bfrac12 U''(x)-(\la+g(x))U(x)=-\la\varPhi(x),\qquad x\not=0,
\label{1.3} \\
& (1-\be)U'(0-)=\be U'(0+). \label{1.4} \end{align}
\end{theorem}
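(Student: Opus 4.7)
The plan is to establish existence by identifying $U$ as the $\lambda$-resolvent (evaluated at $\lambda\varPhi$) of the SBM killed at rate $g$, and to establish uniqueness via an It\^o--Tanaka argument tailored to the skew point $0$.

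\textbf{Existence.} Boundedness of $U$ is immediate from $|U|\le\|\varPhi\|_\infty$, and continuity follows from the Feller property of $W_\beta$ together with dominated convergence, using the piecewise continuity of $\varPhi$ and $g$. To obtain (\ref{1.3}) on $\R\setminus\{0\}$, I would apply the strong Markov property at a small time $h>0$ to write
$$
U(x)=\la\int_0^h e^{-\la t}\EE_x\!\left[\varPhi(W_\be(t))e^{-\int_0^t g(W_\be(s))ds}\right]dt+\EE_x\!\left[e^{-\la h-\int_0^h g(W_\be(s))ds}U(W_\be(h))\right],
$$
and, for $x\neq 0$, use that $W_\beta$ locally behaves as a standard BM (it does not feel the skew point before the hitting time of $0$) to Taylor-expand the right-hand side in $h$. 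Dividing by $h$ and sending $h\downarrow 0$ yields $\lambda\varPhi(x)-(\la+g(x))U(x)+\tfrac12U''(x)=0$, which is (\ref{1.3}). The transmission condition (\ref{1.4}) is then built in because $U$ lies in the domain $\cD$ of $\cG_\be$: indeed, the resolvent of a regular one-dimensional diffusion with speed $m_\beta$ and scale $S_\beta$ maps $C_b(\R)$ into $\cD$, and the Feynman--Kac kernel is the resolvent of the killed SBM. An alternative more constructive route, if the general theory is deemed too coarse, is to solve (\ref{1.3}) explicitly on each half-line by variation of parameters in terms of the fundamental increasing/decreasing solutions of $\tfrac12 u''=(\la+g)u$, select the bounded branch at $\pm\infty$, and match the two pieces at $0$ by continuity and (\ref{1.4}); a direct computation using the strong Markov property at the hitting time of $0$ then identifies the matched function with $U$.

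\textbf{Uniqueness.} Let $U_1,U_2$ be two bounded continuous solutions and set $V:=U_1-U_2$. Then $V$ is bounded, continuous, solves the homogeneous equation $\tfrac12 V''=(\la+g)V$ on $\R\setminus\{0\}$, and satisfies $(1-\be)V'(0-)=\be V'(0+)$. Consider the process
$$
M_t:=V(W_\be(t))\exp\!\Big(-\la t-\int_0^t g(W_\be(s))\,ds\Big).
$$
I would apply the It\^o--Tanaka formula along the SBM path: the ordinary second-order term $\tfrac12 V''$ is exactly cancelled by the killing factor $\la+g$, so the finite-variation part reduces to the singular contribution at $0$, which in the It\^o--Tanaka formula for $W_\beta$ is proportional to $\be V'(0+)-(1-\be)V'(0-)$ times the local time $\ell_\be(t,0)$. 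The transmission condition (\ref{1.4}) makes this term vanish, so $M$ is a local martingale. Because $V$ is bounded and the killing exponential is in $[0,1]$, $M$ is bounded and hence a true martingale; as $t\to\infty$ the factor $e^{-\la t}$ forces $M_t\to 0$ a.s. and in $L^1$. Optional stopping at $t=0$ and dominated convergence give $V(x)=\EE_x[M_0]=\lim_{t\to\infty}\EE_x[M_t]=0$.

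\textbf{Main obstacle.} The principal technical point is the rigorous It\^o--Tanaka calculation at the skew point, in particular checking that the singular term at $0$ is precisely $\bigl(\be V'(0+)-(1-\be)V'(0-)\bigr)\,d\ell_\be(t,0)$ (up to a harmless normalization), so that (\ref{1.4}) kills it. This can be handled either by invoking the Harrison--Shepp decomposition $dW_\be=dW+(2\be-1)d\ell_\be(t,0)$ and the classical It\^o--Tanaka formula for semimartingales with a piecewise $C^2$ function having a prescribed jump in the derivative at $0$, or by localizing with the hitting times of $\pm\varepsilon$ and letting $\varepsilon\downarrow 0$. A secondary issue, if $g$ is not bounded, is ensuring that $M$ is a genuine martingale; this is taken care of by the uniform bound $0\le e^{-\int_0^t g\,ds}\le 1$, so $|M_t|\le \|V\|_\infty$ and standard localization applies.
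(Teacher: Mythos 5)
Your proposal is essentially correct, but it follows a different route from the paper, which in fact gives no self-contained proof of this theorem: the statement is presented as the Feynman--Kac formula for SBM and, as with Theorem A.3, the reader is referred to the method of Ch.~III \S 4 of \cite{borodin13}, i.e.\ the classical analytic route used throughout the paper's appendix -- explicit construction of the solution from the increasing and decreasing fundamental solutions of $\tfrac12 u''=(\la+g)u$ on each half-line, matching at $0$ via continuity and the flux condition, and Green-kernel computations such as (\ref{1.10}). Your constructive ``alternative'' existence argument is precisely that approach; your primary existence argument (Taylor expansion of the Markov-property identity in small $h$) is only heuristic as written, since it presupposes the $C^2$ regularity of $U$ that is to be established, so the variation-of-parameters identification should be regarded as the actual proof. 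One point there deserves to be made explicit: solving the ODE separately on $(-\infty,0)$ and $(0,\infty)$ and using the strong Markov property at the hitting time of $0$ only shows that $U$ solves (\ref{1.3}) on each half-line with common boundary value $U(0)$; the transmission condition (\ref{1.4}) must then be extracted from the structure of SBM at $0$, e.g.\ by noting that the scale derivative $dU/dS_\be$ has no jump at $0$ because neither the speed measure nor the killing measure $g\,dm_\be$ charges $\{0\}$ (equivalently, by reading it off the Green kernel (\ref{1.10})); your appeal to ``the resolvent maps $C_b(\bR)$ into $\cD$'' is exactly this fact and should be argued in these terms rather than assumed. Where you genuinely depart from the paper is uniqueness: instead of the analytic argument implicit in \cite{borodin13}, you run a probabilistic martingale argument via the Harrison--Shepp decomposition, and the key computation does work out -- the symmetric It\^o--Tanaka kink term $\tfrac12\bigl(V'(0+)-V'(0-)\bigr)\,d\ell_\be(t,0)$ combined with the drift contribution $(2\be-1)\,\tfrac12\bigl(V'(0+)+V'(0-)\bigr)\,d\ell_\be(t,0)$ gives exactly $\bigl(\be V'(0+)-(1-\be)V'(0-)\bigr)\,d\ell_\be(t,0)$, which (\ref{1.4}) annihilates; since the one-sided derivatives $V'(0\pm)$ exist automatically (from the ODE, $V''$ is locally bounded near $0$ on each side), $M$ is a bounded local martingale, hence a true martingale, and letting $t\to\infty$ kills it. This buys a self-contained, purely probabilistic uniqueness proof valid for unbounded $g\ge0$, whereas the paper's analytic route has the advantage of producing the explicit $\psi$, $\ph$, Wronskian formulas that the rest of the appendix (proof of Theorem \ref{th 2.1}) actually needs.
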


\begin{remark} \label{re 2.3} For piecewise continuous functions $\varPhi$ and $g$
equation $(\ref{1.3})$  holds at all points of
continuity of $\varPhi$ and $g$ but at the points of discontinuities the solution is constructed to be  continuously differentiable.
\end{remark}

Let for $\ga>0$ 
\begin{equation}\label{Gz}
G_z(x):=\bfrac{d}{d z}
\bE_x\Big\{\exp\Big(-\int_0^{\tau} f(W_\be(s))\,ds-\ga\ell_\be(\tau, q)\Big);
 W_\be(\tau)<z\Big\}. 
 \end{equation}

Then proceeding as Ch. III \S 4  in \cite{borodin13} we may deduce the following result.

\begin{theorem} \label{th 1.2} Let $f$ be nonnegative piecewise
continuous function. Then the function $ G_z$ for a fixed $z$ 
is the unique bounded continuous solution of the problem
\begin{align} &\bfrac12 G''(x)-(\la+f(x))G(x)=0,\qquad x\not=z,q,0,\label{2.6}\\
& G^{\prime}(z+0)-G^{\prime}(z-0)=-2\la,\label{2.7}\\
& G^{\prime}(q+0)-G^{\prime}(q-0)=2\ga G(q),\label{2.8}\\
& (1-\be)G^{\prime}(0-)=\be G^{\prime}(0+).\label{2.9}
\end{align}
\end{theorem}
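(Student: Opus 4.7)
The plan is to apply Theorem~\ref{th 1.1} after approximating both the local time at $q$ (via an occupation-time average) and the $z$-derivative $\partial/\partial z$ (via a short-interval difference). For $\delta,h>0$ set
\[
g_\delta(y):=f(y)+\bfrac{\ga}{2\delta}\,{\bf 1}_{(q-\delta,q+\delta)}(y),\qquad \varPhi_h(y):=\bfrac{1}{h}\,{\bf 1}_{(z,z+h)}(y),
\]
and let $U^{\delta,h}$ be the function given by (\ref{1.2}) with these choices. By Theorem~\ref{th 1.1}, $U^{\delta,h}$ is the unique bounded continuous solution of $\frac12 (U^{\delta,h})''(x)-(\la+g_\delta(x))U^{\delta,h}(x)=-\la\varPhi_h(x)$ for $x\neq 0$, together with the skew condition $(1-\be)(U^{\delta,h})'(0-)=\be(U^{\delta,h})'(0+)$.

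First I would send $\delta\downarrow 0$. By the very definition of the Lebesgue local time, $\int_0^t\frac{1}{2\delta}{\bf 1}_{(q-\delta,q+\delta)}(W_\be(s))\,ds\to\ell_\be(t,q)$ a.s., and since the integrand inside the expectation stays bounded by $\|\varPhi_h\|_\infty$, dominated convergence yields $U^{\delta,h}(x)\to U^{h}(x)$ for every $x$, where
\[
U^h(x):=\bE_x\Big\{\varPhi_h(W_\be(\tau))\exp\Big(-\int_0^\tau f(W_\be(s))\,ds-\ga\,\ell_\be(\tau,q)\Big)\Big\}.
\]
Integrating the ODE for $U^{\delta,h}$ over $(q-\delta,q+\delta)$ and then passing to the limit converts the singular coefficient into the jump $(U^h)'(q+)-(U^h)'(q-)=2\ga\,U^h(q)$; elsewhere the equation $\frac12 (U^h)''-(\la+f)U^h=-\la\varPhi_h$ persists, together with the skew condition at $0$ and continuous differentiability at the discontinuity points $z,z+h$ of $\varPhi_h$ (cf.\ Remark~\ref{re 2.3}).

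Next I would send $h\downarrow 0$. Writing $F_w(x):=\bE_x\{\exp(-\int_0^\tau f\,ds-\ga\,\ell_\be(\tau,q));\,W_\be(\tau)<w\}$, one has $U^h(x)=h^{-1}(F_{z+h}(x)-F_z(x))\to G_z(x)$; differentiability in $w$ is guaranteed because $\bP_x(W_\be(\tau)\in\cdot)$ admits a continuous (resolvent) density with at most a cusp at $y=x$, a regularity that is not destroyed by the Feynman--Kac weight. Away from $z$ the right-hand side $-\la\varPhi_h$ vanishes in the limit, yielding (\ref{2.6}); integrating the equation for $U^h$ across $(z,z+h)$ and sending $h\downarrow 0$ produces the Dirac-like jump $G_z'(z+)-G_z'(z-)=-2\la$, i.e.\ (\ref{2.7}). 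The jump at $q$ in (\ref{2.8}) and the skew boundary condition (\ref{2.9}) pass through from $U^h$ unchanged.

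For uniqueness, if $D$ is the difference of two bounded continuous solutions of (\ref{2.6})--(\ref{2.9}) then $D$ solves the homogeneous equation $\frac12 D''-(\la+f)D=0$ on each open subinterval of $\R\setminus\{0,q,z\}$, is continuous on $\R$, has $D'$ continuous at $z$ (the two $-2\la$ jumps cancel), satisfies $D'(q+)-D'(q-)=2\ga D(q)$ and the skew condition at $0$. Boundedness pins $D$ to the one-dimensional exponentially decaying subspace on each semi-infinite piece, and the remaining matching equations form a square linear system whose only solution is trivial (equivalently, the resolvent Green kernel is unique). The main technical obstacle is the first limit $\delta\downarrow 0$: justifying that the bounded solutions of the singular approximating ODEs converge to a solution carrying the local-time jump at $q$. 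This is handled in the classical way, as in \cite{borodin13}, Ch.~III \S 4.
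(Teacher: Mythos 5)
Your argument is correct and follows essentially the same route as the paper, which simply defers to the classical Feynman--Kac/approximation scheme of Ch.~III \S 4 of \cite{borodin13}: replace $\ga\ell_\be(\tau,q)$ by the occupation-time potential $\frac{\ga}{2\delta}{\bf 1}_{(q-\delta,q+\delta)}$ and $\frac{d}{dz}$ by the indicator $\frac1h{\bf 1}_{(z,z+h)}$, apply Theorem~\ref{th 1.1}, and pass to the limits to produce the jump conditions (\ref{2.7})--(\ref{2.8}) while the skew condition (\ref{2.9}) persists. The two technical points you flag (convergence of the singularly perturbed solutions and differentiability in $z$) are exactly the steps the paper leaves to the cited reference, and your uniqueness sketch matches the standard argument implicit in the paper's explicit determination of the constants $\mu,\eta$.
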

\begin{remark} \label{re 2.31} 
Note that if $q=0$ then $(\ref{2.8})$ and  $(\ref{2.9})$ are combined to be 
\begin{equation}
\be G^{\prime}(0+)-(1-\be)G^{\prime}(0-)=\ga G(0), \label{2.10}
\end{equation}
and if q=z then $(\ref{2.7})$ and  $ (\ref{2.8})$ become
\begin{equation}
G^{\prime}(z+0)-G^{\prime}(z-0)=-2\la+2\ga G(z). \label{2.11}
\end{equation}
\end{remark}
\noindent
 Theorem \ref{th 1.2} can be used to calculate the Green kernel of skew BM; this is
 \begin{equation}
\bfrac{d}{dz}\bP_x(W_\be(\tau)<z)
=\bfrac{\sqrt\la}{\sqrt2}\,{\rm e}^{-|x-z|\sqrt{2\la}}
+\bfrac{\sqrt\la}{\sqrt2}\,(2\be-1)\,\sign z\,{\rm e}^{-(|x|+|z|)\sqrt{2\la}}.
\label{1.10}
\end{equation}
 
\noindent  
 Inverting (\ref{1.10}) yields the transition density  with respect to the Lebesgue measure  (cf. \cite{walsh78a}, and  AI, Nr. 12 \cite{borodinsalminen15})
 \begin{equation}\label{1.11}
\bfrac{d}{dz}\bP_x(W_\be(t)<z)
=\bfrac{{\rm e}^{-(x-z)^2/2t}}{\sqrt{2\pi t}}+(2\be-1)\sign z\,\bfrac{{\rm e}^{-(|x|+|z|)^2/2t}}
{\sqrt{2\pi t}}. 
\end{equation}
Notice that from (\ref{1.11}) it follows that SBM has the scaling property used in (\ref{scaling}).

\subsection{ Proof of Theorem \ref{th 2.1}}
\medskip\noindent
Let $f$ be an arbitrary continuous bounded
function and $q$  an arbitrary real number. Set
$$ f_+(x):=f(x){\bold 1}_{(q,\infty)}(x),\qquad f_-(x):=f(x)
{\bold 1}_{(-\infty,q]}(x). $$
Let $\cG_u^v=\si(\ell_\be(\tau,y),u\le y\le v)$ be the $\si$-algebra of events
generated by the local time $\ell_\be(\tau,y)$ on the interval $[u,v]$.

To prove that $\ell_\be(\tau,y)$, $y\in\bR$, is a Markov process
under the condition $W_\be(\tau)=z$ it suffices to verify that for 
any $q\in\bR$ and $v\in [0,\infty)$
\begin{align}
\bE_0^z &\Big\{\exp\Big(-\int_{-\infty}^{\infty} f(y)\ell_\be(\tau,y)\,dy\Big)\Big|
\ell_\be(\tau,q)=v\Big\}\nonumber\\
&=\bE_0^z\Big\{\exp\Big(-\int_{-\infty}^{\infty} f_+(y)
\ell_\be(\tau,y)\,dy\Big)\Big|\ell_\be(\tau,q)=v\Big\}\label{2.3}\\
&\times\bE_0^z\Big\{\exp\Big(-\int_{-\infty}^{\infty}
f_-(y)\ell_\be(\tau,y)\,dy\Big)\Big|\ell_\be(\tau,q)=v\Big\}.\nonumber
\end{align}
\noindent
Notice that by definition of the local time,
$$\int_{-\infty}^{\infty} f(y)\ell_\be(\tau,y)\,dy=\int_0^{\tau} f(W_\be(s))\,ds $$
and, consequently, the Markov property can be rewritten in the form
\begin{align}
\bE_0^z
\Big\{&\exp\Big(-\int_0^\tau f(W_\be(s))\,ds\Big)\Big|
\ell_\be(\tau, q)=v\Big\}\nonumber\\
&=\bE_0^z\Big\{\exp\Big(-\int_0^{\tau} f_+(W_\be(s))\,ds\Big)\Big|
\ell_\be(\tau, q)=v\Big\}\nonumber\\
&\times\bE_0^z\Big\{\exp\Big(-\int_0^{\tau} f_-(W_\be(s))\,ds\Big)\Big|
\ell_\be(\tau, q)=v\Big \}.\label{2.4}
\end{align}
\noindent 
We prove (\ref{2.4}) by computing for these expectations explicit formulas
expressed in terms of fundamental solutions of the equation
\begin{equation}
\bfrac12\phi^{\prime\prime}(y)-(\la+f(y))\phi(y)=0,\qquad y\in\bR.\label{2.5}
\end{equation}

\noindent 
Let $\psi$  and $\ph$ be the increasing and decreasing, respectively, positive solution
of equation (\ref{2.5}), satisfying the normalization conditions $\psi(q)=\ph(q)=1$,
and $w$ be their (constant) Wronskian. We set
$$
S(x,y):=\psi(x)\ph(y)-\psi(y)\ph(x),\qquad
C(x,y):=\psi'(x)\ph(y)-\psi(y)\ph'(x). $$
Obviously, $\omega=C(x,x)$ and $\bfrac{d}{dx} S(x,y)=C(x,y).$

Assume first that $z>0$  -- the case $z=0$
can be considered analogously. We find the function $G_z(x)$ for $q=0$ as defined in (\ref{Gz}).
We need the bounded continuous solution of the problem (\ref{2.6}), (\ref{2.7}) and 
(\ref{2.10}). For this conisder a candidate solution of the form 
$$G_z(x)=\mu \psi(x){\bold 1}_{(-\infty,0)}(x)
+\bfrac{2\la}{\om} S(x,z){\bold 1}_{[0,z]}(x)
+\eta \ph(x){\bold 1}_{(0,\infty)}(x), $$
where $\mu$ and $\eta$ are unknown constants. The continuity condition at point $z>0$ and condition (\ref{2.7}) hold in view of the structure of the function $G_z$. The continuity condition at $0$ implies
$$\mu \psi(0)=\bfrac{2\la}{\om} S(0,z)+\eta \ph(0). $$
From (\ref{2.10}) it follows that
$$\be\Big(\eta \ph'(0)+\bfrac{2\la}{\om} C(0,z)\Big)-(1-\be)\mu\psi'(0)
=\ga\mu \psi(0).$$
Taking into account the normalization conditions $\ph(0)=\psi(0)=1$, we have
\begin{align} 
& \mu-\eta=\bfrac{2\la}{\om}(\ph(z)-\psi(z)), \label{2.12} \\
&\mu((1-\be)\psi'(0)+\ga)-\eta \be \ph'(0)=\bfrac{2\la\be}{\om} C(0,z).
\label{2.13}
\end{align}
The solution of these algebraic equations are
\begin{align}
&\mu=\bfrac{2\la\be \ph(z)}{\be\om+(1-2\be)\psi'(0)+\ga},\label{2.14}\\
&\eta =\bfrac{2\la\be \ph(z)}{\be\om+(1-2\be)\psi'(0)+\ga}
+\bfrac{2\la}{\om}(\ph(z)-\psi(z)).\label{2.15}
\end{align}
Obviously, $G_z(0)=\mu.$ Inverting the Laplace transform with respect to $\ga$ yields
\begin{align}
\bfrac{\partial}{\partial z}\bfrac{\partial}{\partial v}\bE_0
\Big\{\exp\Big(&-\int_0^\tau f(W_\be(s))\,ds\Big);
\ell_\be(\tau,0)\le v, W_\be(\tau)<z\Big\}\nonumber\\
&=2\la\be\ph(z) {\rm e}^{-v(\be\om+(1-2\be)\psi'(0))}. \label{2.16}
\end{align}
For $f\equiv 0$ the fundamental solutions of (\ref{2.5}) with the properties
$\psi_0(0)=\ph_0(0)=1$ are $\psi_0(y)={\rm e}^{y\sqrt{2\la}}$,
$\ph_0(y)={\rm e}^{-y\sqrt{2\la}}$ and $w_0=2\sqrt{2\la}$ is their Wronskian.
Then for $v>0$
\begin{equation}
\bfrac{\partial}{\partial z}\bfrac{\partial}{\partial v}
\bP_0(\ell_\be(\tau,0)\le t, W_\be(\tau)<z)= 2\la\be
{\rm e}^{-(v+z)\sqrt{2\la}}. \label{2.17}
\end{equation}
Dividing (\ref{2.17}) by (\ref{1.10}) gives  (\ref{2.2}). Dividing (\ref{2.16}) by (\ref{2.17}) we have for the left hand side of (\ref{2.4})
\begin{align}
\bE_0^z\Big\{\exp\Big(-\int_0^\tau & f(W_\be(s))\,ds\Big)\Big|
\ell_\be(\tau,0)=v\Big\}\nonumber\\
&=\ph(z) {\rm e}^{(z+v)\sqrt{2\la}} {\rm e}^{-v(\be\om+(1-2\be)\psi'(0))}. \label{2.18}
\end{align}
Next we compute the analogous expression for the function $f_+$ in terms of
functions $\ph$ and $\psi$.
Since formula (\ref{2.18}) was obtained for arbitrary nonnegative piecewise
continuous function $f$ in terms of fundamental solutions of equation (\ref{2.5}),
it is possible to use these formulas for the function $f_+$. To do this
we express $\psi_+$, $\ph_+$, the fundamental solutions of equation (\ref{2.5})
with the function $f_+$ in place of $f$, via the fundamental solutions
$\psi$, $\ph$ for arbitrary $q$. We compute $\psi_+$ as an increasing continuous function with continuous first
derivative, satisfying the condition $\psi_+(q)=1$. We also compute
$\ph_+$ as a decreasing function with the condition $\ph_+(q)=1$. As a result,
$$
\psi_+(y)=\begin{cases} {\rm e}^{(y-q)\sqrt{2\la}}, & y\le q,\\
\bfrac{\sqrt{2\la}-\ph'(q)}{w}\psi(y)
+\bfrac{\psi'(q)-\sqrt{2\la}}{w}\ph(y), & q\le y,\end{cases} $$
$$\ph_+(y)=\begin{cases} \Big(\bfrac 12+\bfrac{\ph'(q)}{2\sqrt{2\la}}\Big)
{\rm e}^{(y-q)\sqrt{2\la}}+\Big(\bfrac 12-\bfrac{\ph'(q)}{2\sqrt{2\la}}\Big)
{\rm e}^{(q-y)\sqrt{2\la}}, & y\le q,\\
\ph(y), & q\le y.\end{cases} $$
The Wronskian of $\psi_+ $, $\ph_+ $ is
$$w_+=\psi_+'(q)-\ph_+'(q)=\sqrt{2\la}-\ph'(q).$$
Substituting $\psi_+$ and $\ph_+ $ in place of $\psi$ and $\ph$ for $q=0$
in (\ref{2.18}) and simplifying yield 
\begin{align}
\bE_0^z\Big\{\exp\Big(&-\int_0^\tau
 f_+(W_\be(s))\,ds\Big)\Big|\ell_\be(\tau,0)=v\Big\}\nonumber\\
&={\rm e}^{z\sqrt{2\la}}\ph(z) {\rm e}^{\be(\sqrt{2\la}+\ph'(0))}.\label{2.19}
\end{align}

We carry out the computation for $ f_-$. By the continuity of
the derivative at $q$ the fundamental solutions of
equation (\ref{2.5}) with $f_-$ in place of $f$ are
$$\psi_-(y)=\begin{cases} \psi(y), & y\le q,\\
\Big(\bfrac12+\bfrac{\psi'(q)}{2\sqrt{2\la}}\Big) {\rm e}^{(y-q)\sqrt{2\la}}
+\Big(\bfrac12-\bfrac{\psi'(q)}{2\sqrt{2\la}}\Big) {\rm e}^{(q-y)\sqrt{2\la}}, & q\le y,
\end{cases} $$
$$\ph_-(y)=\begin{cases} \bfrac{\sqrt{2\la}+\psi'(q)}{w}\ph(y)-\bfrac{\sqrt{2\la}+\ph'(q)}{w}\psi(y),
& y\le q,\\
{\rm e}^{(q-y)\sqrt{2\la}}, & q\le y\end{cases} $$
and the Wronskian  is
$$ w_-=\psi_-'(q)-\ph_-'(q)=\psi'(q)+\sqrt{2\la.} $$
Substituting $\psi_-$ and $\ph_-$ with $q=0$ in place of $\psi$ and $\ph$ in
(\ref{2.18}) and simplifying gives
\begin{equation}
\bE_0^z\Big\{\exp\Big(-\int_0^\tau f_-(W_\be(s))\,ds\Big)\Big|
\ell_\be(\tau,0)=v\Big\}={\rm e}^{(1-\be)v(\sqrt{2\la}-\psi'(0))}.\label{2.20}
\end{equation}
Since (\ref{2.18}) is equal to the product of (\ref{2.19}) by (\ref{2.20}), 
this proves (\ref{2.4})
and the Markov property at point zero holds.  The case  $q>z>0$ is studied analogously. We leave the details to the reader but point out the main formula
$$
\bE_0\Big\{\exp\Big(-\int_0^\tau f_+(W_\be(s))\,ds\Big)\Big|
\ell_\be(\tau,q)=v, W_\be(\tau)=z\Big\}$$
\begin{equation}
=\begin{cases} {\rm e}^{v(\ph'(q)+\sqrt{2\la})/2}, & 0<z\le q,\\
{\rm e}^{(z-q)\sqrt{2\la}}\ph(z) {\rm e}^{v(\ph'(q)+\sqrt{2\la})/2}, & 0<q\le z.\end{cases} 
\label{2.34}
\end{equation}
Also the details of the analysis of the case $q<0$ and $z>0$ are omitted. 

It remains to characterize the generating operators of these Markov processes. Hence  we consider  for $\eta>0$ the Laplace transforms
\begin{equation}\label{u00}
 u_{\pm}(h,v):=\bE_0^z\big\{{\rm e}^{-\eta\ell_\be(\tau,\pm h)}\big|
\ell_\be(\tau,0)=v\big\},\qquad\qquad h>0.
\end{equation}
To compute the function $u_+(h,v)$ we can use (\ref{2.19}), where the Dirac
$\del$-function at point $h$ multiplied by $\eta$ is taken instead of
the function $f_+$, i.e., in place of $f_+$
we consider the family of functions
$$
\Big\{\bfrac\eta\ep{\bold 1}_{[h,h+\ep)}(\cdot)\Big\}_{\ep>0}
$$
and pass to the limit as $\ep\downarrow 0$ in 
(\ref{2.19}).
This procedure is analogous to the one  used in the proof of
Theorem~3.1 Ch.~III of \cite{borodin13}. Clearly, we may as well consider $q+h$ instead of $h.$
If  $y\ge
q$ the fundamental solution $\ph_\delta$ of (\ref{2.5}) with the condition
$\ph_\delta(q)=1$  is the continuous decreasing solution of the
 problem
\begin{align} &\bfrac12\phi''(y)-\la\phi(y)=0,\qquad y\in(q,\infty)
\setminus\{q+h\},\nonumber\\
&\phi'(q+h+0)-\phi'(q+h-0)=2\eta\,\phi(q+h),\qquad\ph(q)=1,\nonumber
\end{align}
and is given by 
\begin{equation}
\ph_\delta(y)={\rm e}^{(q-y)\sqrt{2\la}}
+\bfrac{\eta\,{\rm e}^{-2h\sqrt{2\la}}({\rm e}^{(q-y)\sqrt{2\la}}-{\rm e}^{h\sqrt{2\la}}\,
{\rm e}^{-|y-q-h|\sqrt{2\la}})}{\sqrt{2\la}
(1+\frac{\eta}{\sqrt{2\la}}(1-{\rm e}^{-2h\sqrt{2\la}}))}.\label{2.21}
\end{equation}
Analogously, the  increasing solution is
\begin{equation}
\psi_\delta(y)={\rm e}^{(y-q)\sqrt{2\la}}+\bfrac{\eta}{\sqrt{2\la}}
\Big({\rm e}^{(y-q)\sqrt{2\la}}
-{\rm e}^{h\sqrt{2\la}}\,{\rm e}^{-|y-q-h|\sqrt{2\la}}\Big),\label{2.22}
\end{equation}
and the Wronskian is
\begin{equation}
\om_\delta=\psi_\del'(q)-\ph_\del'(q)
=\bfrac{2\sqrt{2\la}(1+\frac{\eta}{\sqrt{2\la}})}
{1+\frac{\eta}{\sqrt{2\la}}(1-{\rm e}^{-2h\sqrt{2\la}})}.\label{2.23}
\end{equation}

For $q-h$ we consider the problem
\begin{align} 
&\bfrac12\phi''(y)-\la\phi(y)=0,\qquad y\in(-\infty,q)\setminus\{q-h\},\nonumber\\
&\phi'(q-h+0)-\phi'(q-h-0)=2\eta\,\phi(q-h),\qquad\ph(q)=1.\nonumber
\end{align}
The solution is given by 
\begin{equation}
\ph_\delta(y)={\rm e}^{(q-y)\sqrt{2\la}}+\frac{\eta}{\sqrt{2\la}}
\Big({\rm e}^{(q-y)\sqrt{2\la}}
-{\rm e}^{h\sqrt{2\la}}\,{\rm e}^{-|y-q+h|\sqrt{2\la}}\Big),\label{2.24}
\end{equation}
\begin{equation}
\psi_\delta(y)={\rm e}^{(y-q)\sqrt{2\la}}
+\frac{\eta\,{\rm e}^{-2h\sqrt{2\la}}({\rm e}^{(y-q)\sqrt{2\la}}-{\rm e}^{h\sqrt{2\la}}\,
{\rm e}^{-|y-q+h|\sqrt{2\la}})}{\sqrt{2\la}
(1+\frac{\eta}{\sqrt{2\la}}(1-{\rm e}^{-2h\sqrt{2\la}}))},\label{2.25}
\end{equation}
and the Wronskian is as in (\ref{2.23}).

For $q=0$ and $0<h<z$ we have $\psi'_\del(0)=\sqrt{2\la}$ and
$$\ph_\delta(z)
=\bfrac{{\rm e}^{-z\sqrt{2\la}}}{1+\bfrac{\eta}{\sqrt{2\la}}(1-{\rm e}^{-2h\sqrt{2\la}})}. $$
Therefore we find by substituting (\ref{2.21}) in (\ref{2.19}) that $u_+$ as defined in (\ref{u00}) takes the form 
$$ u_+(h,v)=\bfrac1{1+\bfrac{\eta}{\sqrt{2\la}}(1-{\rm e}^{-2h\sqrt{2\la}})}
\exp\Big(-\bfrac{2v\be\eta {\rm e}^{-2\sqrt{2\la} h}}{1+
\bfrac\eta{\sqrt{2\la}}(1-{\rm e}^{-2h\sqrt{2\la}})}\Big), $$
and, hence,
$$\bE_0^z\big\{{\rm e}^{-\eta\ell_\be(\tau,0+)}\big|\ell_\be(\tau,0)=v\big\}
= {\rm e}^{-2\be v\eta}.$$
Similarly, substituting the derivative of (\ref{2.25}) in (\ref{2.20}), we have
$$ u_-(h,v)=
\exp\Big(-\bfrac{2v(1-\be)\eta {\rm e}^{-2\sqrt{2\la} h}}{1+
\bfrac\eta{\sqrt{2\la}}(1-{\rm e}^{-2h\sqrt{2\la}})}\Big), $$
and
$$\bE_0^z\big\{{\rm e}^{-\eta\ell_\be(\tau,0-)}\big|\ell_\be(\tau,0)=v\big\}
= {\rm e}^{-2(1-\be) v\eta}.$$
Therefore, under the condition $\ell_\be(\tau,0)=v$ the local time $\ell_\be(\tau,0+)$
equals $2\be v$ and  $\ell_\be(\tau,0-)$
equals $2(1-\be) v$. 
Thus also these calculations prove that the local time of the SBM is discontinuous at zero, and (\ref{121})  and (\ref{122}) hold.


To conclude the proof it remains to find the generating operators of 
$V_1$, $V_2$ and $V_3$. For this aim  we first
compute for $h\ge 0 $, $v>0 $ and $\eta>0$ the expressions for
\begin{align} &u_1(h,v):=\bE_0^z\big\{{\rm e}^{-\eta\ell(\tau,q+h)}\big|\ell(\tau, q)=v\big\},
\qquad 0<z\le q,\nonumber\\
&u_2(h,v):=\bE_0^z\big\{{\rm e}^{-\eta\ell(\tau,q+h)}\big|\ell(\tau, q)=v\big\},
\qquad 0<q<q+h\le z,\nonumber\\
&u_3(h,v):=\bE_0^z\big\{{\rm e}^{-\eta\ell(\tau, q-h)}\big|\ell(\tau,q)=v\big\},
\qquad q-h<q<0<z.\nonumber
\end{align}
These functions with respect to $\eta $ are the Laplace transforms of the transition
functions of $V_1$, $V_2$, and $V_3,$ respectively. Therefore they uniquely
determine the generating operators of the processes.

To compute $u_1(h,v)$ and $u_2(h,v)$ we can use (\ref{2.34}), where
the Dirac $\del$-function at point $q+h$, multiplied by $\eta$, is taken instead
of the function $f_+$.
Therefore we find by substituting the function (\ref{2.21}) in (\ref{2.34})
that
$$ u_1(h,v)=\exp\Big(-\bfrac{2v\eta {\rm e}^{-2\sqrt{2\la} h}}{1+
\bfrac\eta{\sqrt{2\la}}(1-{\rm e}^{-2h\sqrt{2\la}})}\Big). $$
$$ u_2(h,v)=\bfrac1{1+\bfrac{\eta}{\sqrt{2\la}}(1-{\rm e}^{-2h\sqrt{2\la}})}
\exp\Big(-\bfrac{2v\eta {\rm e}^{-2\sqrt{2\la} h}}{1+
\bfrac\eta{\sqrt{2\la}}(1-{\rm e}^{-2h\sqrt{2\la}})}\Big), $$
and
$$ u_3(h,v)=\exp\Big(-\bfrac{2v\eta {\rm e}^{-2\sqrt{2\la} h}}
{1+\bfrac\eta{\sqrt{2\la}}(1-{\rm e}^{-2h\sqrt{2\la}})}\Big). $$
Since $u_i(h,v)$, $i=1,2,3$, does not depend on $q$, the processes
$V_i\, i=1,2,3,$  are time homogeneous.
Now it can be checked that  $u_i, \, i=1,2,3,$ satisfy the equations
$$
\bfrac{\partial u_1}{\partial h}=\BL_1u_1,\qquad\bfrac{\partial u_2}
{\partial h}=\BL_2u_2,\qquad\bfrac{\partial u_3}
{\partial h}=\BL_3 u_3, 
$$ 
where the operators $\BL_i,\,i=1,2,3,$ are as given in Theorem \ref{th 2.1}. This completes the proof of the theorem. 
\hfill$\square$

\medskip
\noindent   
 {\bf Acknowledgements.} This research was partially supported by a grant from Magnus Ehrnrooths stiftelse, Finland, and the grant SPbU-DGF 6.65.37.2017. We thank  Endre Cs\'aki and Antonia  F{\"o}ldes  for posing the problem of finding the law of the supremum of the local time of a skew Brownian motion.


\end{document}